 \newcommand{\ROM}[1]{\mathrm{\uppercase\expandafter{\romannumeral#1}}}
  \theoremstyle{definition}
 \newtheorem{thm}{Theorem}[section]
 \newtheorem{lem}{Lemma}[section]
 \newtheorem{cor}{Corollary}[section]
  \newtheorem{examp}{Example}[section]
 \newtheorem{rem}{Remark}[section]
 \newtheorem{prop}{Proposition}[section]
  \newtheorem{prob}[thm]{Problem}
\newtheorem{ack}{Acknowledgements}   
\title[Isoparametric functions and exotic spheres]{\textbf{Isoparametric functions and exotic spheres}}
\author[J. Q. Ge]{Jianquan Ge}\address{School of Mathematical Sciences, Laboratory of Mathematics and Complex Systems, Beijing Normal
University, Beijing 100875}\email{jqge@bnu.edu.cn}
\author[Z. Z. Tang]{Zizhou Tang}\address{School of Mathematical Sciences, Laboratory of Mathematics and Complex Systems, Beijing Normal University, Beijing 100875}\email{zztang@mx.cei.gov.cn}
\thanks {The project is partially supported by the NSFC (No.11071018 and No.11001016), the SRFDP, and the Program for Changjiang Scholars and Innovative Research Team in University.}
\thanks{The second author is the corresponding author.}
 \subjclass[2000]{ 53C20, 57R60.}
\date{}
\keywords{transnormal function, isoparametric hypersurface, Gromoll-Meyer sphere.}
\begin{document}
\maketitle

 \begin{center}
 Dedicated to Professor Banghe Li on his 70th birthday.
 \end{center}

\maketitle
\begin{abstract}
The first part of the paper is to improve the fundamental theory of
isoparametric functions on general Riemannian manifolds. Next we
focus our attention on exotic spheres, especially on ``exotic"
$4$-spheres (if exist) and the Gromoll-Meyer sphere. In particular,
as one of main results we prove： there exists no properly
transnormal function on any exotic $4$-sphere if it exists.
Furthermore, by projecting an $S^3$-invariant isoparametric function
on $Sp(2)$, we construct a properly transnormal but not an
isoparametric function on the Gromoll-Meyer sphere with two points
as the focal varieties.

\end{abstract}
\section{Introduction}

A hypersurface $M^n$ in a real space form $N^{n+1}(c)$ with constant sectional curvature $c$ is said to be \emph{isoparametric} if
it has constant principal curvatures. Since the work of Cartan (\cite{Ca38},\cite{Ca39}) and M{\"u}nzner (\cite{Mu80}),
the subject of isoparametric hypersurfaces especially in the spherical case is rather fascinating to geometers. They found that each isoparametric hypersurface in a sphere determines a so-called isoparametric function (or Cartan polynomial) that satisfies certain equations, the so-called Cartan-M{\"u}nzner equations, and conversely the set of level hypersurfaces of an isoparametric function consists of a family of parallel isoparametric hypersurfaces (cf. \cite{CR85}). From then on, besides geometrical and topological viewpoints, people also put extensive attention on algebraical approach to this area (see \cite{Th00} for an excellent survey and see \cite{CCJ07}, \cite{Imm08}, \cite{GX10} for recent progresses and applications).

To study such submanifold geometry in a Riemannian manifold more
general than space forms, such as Terng and Thorbergsson \cite{TT95}
did in symmetric spaces where they generalized the notion of
\emph{isoparametric} to \emph{equifocal}, requires much more
calculations and analysis, sometimes even lacking any effective way
to compute some local invariants like covariant derivatives, shape
operators, mean curvatures, \emph{etc}. Following the work of Wang
\cite{Wa87}, we study the subject of isoparametric functions on
general Riemannian manifolds, especially on exotic spheres. Recall
that an $n$-dimensional smooth manifold $\Sigma^n$ is called an
\emph{exotic n-sphere} if it is homeomorphic but not diffeomorphic
to $S^n$. It is Milnor \cite{Mil56} who firstly discovered an exotic
7-sphere which is an $S^3$-bundle over $S^4$. In fact, the
$S^3$-bundles over $S^4$ are in one-to-one correspondence with
elements of $\pi_3SO(4)\cong \mathbb{Z}\oplus\mathbb{Z}$. Let $M^7$
be the total space of an $S^3$-bundle corresponding to
$(m,n)\in\mathbb{Z}\oplus\mathbb{Z}$. Milnor found that when
$m+n=1$, $M^7$ is homeomorphic to $S^7$, and if further $(m-n)^2$ is
not congruent to $1$ $(mod~7)$, $M^7$ is not diffeomorphic to $S^7$
and thus it's an exotic 7-sphere. Later, Kervaire and Milnor
\cite{KM63} computed the group of homotopy spheres in each dimension
greater than four which implies that there exist exotic spheres in
infinitely many dimensions and in each case there are at most
finitely many exotic spheres. In particular, ignoring orientation
there exist 14 exotic 7-spheres, 10 of which can be exhibited as
$S^3$-bundles over $S^4$, the so-called \emph{Milnor spheres}. In
1966, Brieskorn \cite{Br66} was able to realize many odd-dimensional
exotic spheres as subsets of standard spheres. For example, one of
the so-called \emph{Brieskorn varieties} is defined by equation:
$z_0^d+z_1^2+\cdots +z_n^2=0$ for $(z_0,\cdots,z_n)\in
S^{2n+1}\subset \mathbb{C}^{n+1}$ and certain integers $d,n$ (for
more details, see Example \ref{Brieskorn-examp} in Section
\ref{section-isop}). In dimension four, however, the question of
whether an exotic 4-sphere possibly exists, and if so, how many
there are still remains open today. This is essentially the
\emph{smooth Poincar\'{e} conjecture} in dimension four (cf.
\cite{JW08}). As for Riemannian geometry on exotic spheres,
especially for sectional curvature, Gromoll and Meyer \cite{GM74}
produced the first example of an exotic 7-sphere with a metric of
non-negative sectional curvature, which is now called the
\emph{Gromoll-Meyer sphere}. By constructing invariant metrics of
non-negative sectional curvature on cohomogeneity one manifolds with
codimension two singular obits, and applying this to the associated
principle bundles of the Milnor spheres, Grove and Ziller
\cite{GZ00} proved that all 10 Milnor spheres admit metrics with
non-negative sectional curvature. In 2008, Petersen and Wilhelm
\cite{PW08} showed that there is a metric on the Gromoll-Meyer
sphere with positive sectional curvature.

 In this paper, first we improve the fundamental theory of isoparametric functions on general Riemannian manifolds. In Section \ref{section-isop}, based on an
original result of Wang \cite{Wa87} which asserts that singular level sets of a transnormal function are submanifolds and each
regular level set is a tube over them, we describe further topological and geometrical properties of level sets of a
transnormal or an isoparametric function. For example, in Theorem \ref{proper equiv-def} we show: Each component of the singular sets
has codimension not less than $2$ if and only if the singular sets are exactly the focal set of every regular level set; Moreover in
this case, each level set is connected; If in addition the function is isoparametric on a closed manifold, then at least one level hypersurface is minimal. Furthermore, we observe three simple ways to construct examples of isoparametric functions, \emph{i.e.},
(1) For a Riemannian manifold $(N, ds^2)$ with an isoparametric function $f$, take a special conformal deformation
$\widetilde{ds^2}=e^{2u(f)}ds^2$. Then $f$ is also isoparametric on $(N, \widetilde{ds^2})$; (2) For a cohomogeneity one manifold
$(N,G)$ with a $G$-invariant metric, taking composition of some smooth functions on $N/G$ with the projection $\pi:N\rightarrow
N/G$, we get isoparametric functions on $N$; (3) For a Riemannian submersion $\pi: E\rightarrow B$ with minimal fibers, if $f$ is an
isoparametric function on $B$, then so is $F:=f\circ \pi$ on $E$. Applying the second and the third method, we get examples of isoparametric functions on Brieskorn varieties and on isoparametric hypersurfaces of spheres.

Next we focus our attention on such Riemannian geometry on exotic
spheres, especially on ``exotic" 4-spheres (if exist) and the
Gromoll-Meyer sphere. In particular, as one of our main results we
prove in Section \ref{section-4sphere}:
\begin{thm}\label{homotopy 4-sphere}
Suppose $\Sigma^4$ is a homotopy $4$-sphere and it admits a properly transnormal function under some metric. Then $\Sigma^4$ is diffeomorphic to $S^4$.
\end{thm}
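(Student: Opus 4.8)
The plan is to turn the hypothesis into the classical ``double disk bundle'' picture and then to run through the few shapes it can have on a homotopy $4$-sphere. By Wang's theorem \cite{Wa87} together with Theorem \ref{proper equiv-def}, a properly transnormal $f$ realizes $\Sigma^4$ as $\nu(M_-)\cup_M\nu(M_+)$, where $M_\pm$ are the (connected, closed) focal varieties, each of codimension $\geq 2$, $\nu(M_\pm)$ their closed normal disk bundles, and $M$ a regular level set --- at once the common boundary $\partial\nu(M_-)=\partial\nu(M_+)$ and a sphere bundle over each $M_\pm$. As $\Sigma$ is a homotopy sphere it is orientable, so $M$ is a closed orientable $3$-manifold and $\dim M_\pm\in\{0,1,2\}$. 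Mayer--Vietoris for this decomposition yields $\chi(M_-)+\chi(M_+)=2$ (since $\chi(M)=0$), $H_3(M)\cong\mathbb{Z}$, and $H_i(M)\cong H_i(M_-)\oplus H_i(M_+)$ for $i=1,2$; van Kampen yields $1=\pi_1(\Sigma)=\pi_1(M_-)*_{\pi_1(M)}\pi_1(M_+)$.

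Imposing these constraints on the finitely many possibilities for $(\dim M_-,\dim M_+)$ collapses everything to three configurations. The Euler characteristic identity rules out every case with $\chi(M_-)+\chi(M_+)\neq2$ (a point contributes $1$, a closed curve $0$, so e.g. $(\mathrm{pt},S^1)$ and $(S^1,S^1)$ cannot occur), and comparing the two sphere-bundle structures on $M$ through $\pi_1$ and $H_*$ eliminates the rest: if one focal variety is a point then $M=S^3$, which fibers neither over $\mathbb{RP}^2$ nor over a torus; if the two focal surfaces are $S^2$ and $T^2$ then $M$ would be a circle bundle over $S^2$, forcing $\pi_1(M)$ cyclic, and over $T^2$, forcing $\pi_1(M)$ to have a free rank-$2$ quotient; and so on. The survivors are: (I) $M_-=M_+=\mathrm{pt}$, $M=S^3$; (II) $\{M_-,M_+\}=\{S^1,S^2\}$, $M=S^1\times S^2$, with both disk bundles trivial; and (III) $M_-=M_+=\mathbb{RP}^2$, $M$ an orientable circle bundle over $\mathbb{RP}^2$ with $H_1(M)\cong(\mathbb{Z}/2)^2$.

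Cases (I) and (II) follow from classical facts about diffeomorphism groups of small $3$-manifolds. In (I), $\Sigma=D^4\cup_\varphi D^4$ with $\varphi\in\mathrm{Diff}(S^3)$, and Cerf's theorem ($\pi_0\,\mathrm{Diff}^+(S^3)=1$) gives $\Sigma\cong S^4$. In (II), $\Sigma=(S^1\times D^3)\cup_\varphi(S^2\times D^2)$ with $\varphi\in\mathrm{Diff}(S^1\times S^2)$, and by Hatcher's computation $\pi_0\,\mathrm{Diff}(S^1\times S^2)\cong(\mathbb{Z}/2)^3$ is generated by the reflection of the $S^1$ factor, the reflection of the $S^2$ factor, and the $\pi_1(SO(3))$-twist; each of these extends to a diffeomorphism of $S^1\times D^3$ (reflect $S^1$; reflect $D^3$; act fibrewise on $D^3\subset\mathbb{R}^3$ by the $SO(3)$-valued loop), so $\varphi$ is isotopic to one extending over $S^1\times D^3$, whence $\Sigma\cong(S^1\times D^3)\cup_{\mathrm{id}}(S^2\times D^2)=\partial(D^2\times D^3)=S^4$.

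The substance of the theorem is case (III). Since $w_1(\Sigma)=0$ the normal bundle $W=\nu(\mathbb{RP}^2)$ must be nonorientable, and $H_1(M)\cong(\mathbb{Z}/2)^2$ (as opposed to $\mathbb{Z}/4$) forces its twisted normal Euler number to be even; Massey's theorem on embedded $\mathbb{RP}^2$'s in a (homotopy) $4$-sphere then pins this number down to $\pm2$, so both normal disk bundles are the same bundle $W$ and $M=\partial W\cong S^3/Q_8$ is the quaternionic spherical space form. Thus $\Sigma=W\cup_\varphi W$ for some $\varphi\in\mathrm{Diff}(S^3/Q_8)$, and the standard $S^4$ already realizes one such gluing, via the linear cohomogeneity-one $SO(3)$-action on $S^4\subset\mathrm{Sym}_0(\mathbb{R}^3)$, whose two singular orbits are copies of $\mathbb{RP}^2=SO(3)/O(2)$ with regular orbit $SO(3)/(\mathbb{Z}/2)^2\cong S^3/Q_8$. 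The diffeomorphism type of $W\cup_\varphi W$ depends on $\varphi$ only through its class in $\mathrm{MCG}(S^3/Q_8)$ modulo the classes extending over $W$; $\mathrm{MCG}(S^3/Q_8)$ is a known finite group, with image the full $\mathrm{Out}(Q_8)\cong S_3$, and one checks that exactly the classes for which $\pi_1(W\cup_\varphi W)$ is trivial produce the standard model. I expect this final verification to be the genuine obstacle: it asks for the mapping class group of $S^3/Q_8$, its action on the index-$2$ subgroups of $Q_8$, and which elements bound over the twisted disk bundle over $\mathbb{RP}^2$ --- essentially an unknottedness statement for the standard $\mathbb{RP}^2\subset S^4$ relative to its tube. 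A hands-on alternative is handle calculus: $W$ admits a handle presentation with a single $0$-, $1$-, and $2$-handle, so $\Sigma$ is built from handles of indices $0,1,2,2,3,4$; simple connectivity lets one cancel the $1$-handle against a $2$-handle algebraically, and performing that cancellation geometrically --- the one delicate step, but tractable given the very rigid form of the attaching data --- leaves handles of indices $0,2,3,4$, from which $\Sigma\cong S^4$ follows.
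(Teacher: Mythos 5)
Your overall strategy coincides with the paper's: reduce to the double disk bundle decomposition $\Sigma^4=\nu(M_-)\cup_M\nu(M_+)$, enumerate the three possible pairs of focal varieties, and treat each by a gluing argument. Your route to the three cases (Mayer--Vietoris and van Kampen applied directly to the decomposition) is a legitimate variant of the paper's appeal to M\"unzner's cohomological theorem, and your treatments of cases (I) and (II) --- Cerf's theorem for $D^4\cup_\varphi D^4$, and the extension of all three generators of the mapping class group of $S^1\times S^2$ over $S^1\times D^3$ --- are exactly the arguments in the paper (via \cite{Ce68} and \cite{Gl62}). Two caveats in the reduction: make sure your ``and so on'' really disposes of all closed surfaces passing the Euler characteristic test (e.g.\ $(\mathrm{pt},\mathbb{R}P^2)$ and $(S^2,K^2)$ both satisfy $\chi(M_-)+\chi(M_+)=2$ and must be killed by comparing the two sphere-bundle structures on $M$), and note that pinning down the normal bundle of $\mathbb{R}P^2$ requires Massey's theorem to hold in an arbitrary homotopy $4$-sphere, not just in $S^4$; the paper observes that this is legitimate because Massey's proof is purely topological.

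The genuine gap is case (III), and you flag it yourself. Having identified $\Sigma^4=W\cup_\varphi W$ with $W$ the twisted normal disk bundle over $\mathbb{R}P^2$ and $\partial W\cong SO(3)/\mathbb{Z}_2\oplus\mathbb{Z}_2\cong S^3/Q_8$, you leave unproved precisely the step that carries the content: that every such gluing with trivial fundamental group is diffeomorphic to $S^4$. Neither of your suggested strategies is carried out, and the handle-calculus alternative is not routine in dimension four --- promoting an algebraic cancellation of a $1$--$2$ handle pair to a geometric one is exactly the kind of step that fails in general and is morally equivalent to what you are trying to prove. The paper closes this gap by quoting Price's theorem \cite{Pr77}: a manifold obtained by gluing two copies of the normal disk bundle of the Veronese $\mathbb{R}P^2\subset S^4$ along a diffeomorphism of $SO(3)/\mathbb{Z}_2\oplus\mathbb{Z}_2$ is either diffeomorphic to $S^4$ or has fundamental group $\mathbb{Z}_2$, and simple connectivity of $\Sigma^4$ forces the former. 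Without this result (or an equivalent statement about ``Price surgery''), your proof is incomplete.
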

Note that a homotopy n-sphere is a smooth manifold with the same homotopy type as $S^n$. Freedman \cite{Fr82} showed that any homotopy 4-sphere is homeomorphic to $S^4$. As a result of this, the above theorem says equivalently that there exists no properly transnormal function (see Section \ref{section-isop} for the definition) on any exotic 4-sphere if it exists. On the other hand, in Section \ref{section-7sphere}, we are able to construct many examples of isoparametric functions on the Milnor spheres. Furthermore, as another one of our main results (see Theorem \ref{isop-on-GM sphere}), by projecting an $S^3$-invariant isoparametric function on $Sp(2)$, we construct a properly transnormal but not an isoparametric function on the Gromoll-Meyer sphere with two points as the focal varieties, which differs from the case occurring on $S^7$. As a consequence, we pose a question that whether there is an isoparametric function on the Gromoll-Meyer sphere or any exotic n-sphere ($n>4$) with two points as the focal varieties. More generally, we pose Problem \ref{problem} that whether there always exist an isoparametric function on an exotic n-sphere ($n>4$) with the focal varieties being those occurring on $S^n$.

\section{Isoparametric functions on Riemannian manifolds}\label{section-isop}
We start with fundamental definitions. A non-constant smooth function $f: N\rightarrow \mathbb{R}$ defined on a Riemannian manifold $N$ is called \emph{transnormal} if there is a smooth function $b:\mathbb{R}\rightarrow\mathbb{R}$ such that
\begin{equation}\label{iso1}
|\nabla f|^2=b(f),
\end{equation}
where $\nabla f$ is the gradient of $f$. If moreover there is a continuous function $a:\mathbb{R}\rightarrow\mathbb{R}$ such that
\begin{equation}\label{iso2}
\triangle f=a(f),
\end{equation}
where $\triangle f$ is the Laplacian of $f$, then $f$ is called \emph{isoparametric} (cf. \cite{Wa87}).
Equation (\ref{iso1}) means that the regular hypersurfaces $M_t:=f^{-1}(t)$ (where $t$ is any regular value of $f$) are parallel and (\ref{iso2}) says that these hypersurfaces have constant mean curvatures. In fact, the first assertion comes from the observation that in this case the integral curves to the vector field $\nabla f/|\nabla f|$ (where $|\nabla f|\neq0$) are geodesics in $N$, and the second is due to the following relation between the shape operator $A$ of $M_t$ and the Hessian $H_f$ of $f$: \begin{equation}\label{shape-hessian}
\langle AX, Y\rangle=-\frac{H_f(X,Y)}{|\nabla f|},
\end{equation}
 where $X$ and $Y$ are tangent vectors to $M_t$. We call these parallel hypersurfaces $M_t$ with constant mean curvatures a family of \emph{isoparametric hypersurfaces}. Note that though $N$ could be non-orientable, the normal bundle $\nu(M_t)$ of each regular level hypersurface $M_t$ of a transnormal function $f$ must be orientable since $\nu:=\nabla f/|\nabla f|$ is a global unit normal vector field of $M_t$ in $N$. Therefore, the normal exponential map $exp: \nu(M_t)\cong M_{t}\times \mathbb{R}\rightarrow N$ which is the restriction of the exponential map $exp$ of $N$ to $\nu(M_t)$ can be written as:
  \begin{equation}\label{normal exp}
  exp(p,s)=exp_p(s\nu)\quad for \quad (p,s)\in M_{t}\times \mathbb{R}.
  \end{equation}

Given a transnormal function $f: N\rightarrow \mathbb{R}$, we denote by $C_1(f)$ the set where $f$ attains its global maximum value or global minimum value, by $C_2(f)$ the union of singular level sets of $f$, \emph{i.e.}, $C_2(f)=\{p\in N| \nabla f(p)=0\}$, and for any regular value $t$ of $f$, by $C^t_3(f)$ the focal set of the level hypersurface $M_t:=f^{-1}(t)$, \emph{i.e.}, the set of singular values of the normal exponential map. By virtue of Lemma $3$ and Lemma $1$ of \cite{Wa87}, it's easily seen that $C_1(f)=C_2(f)$ which was called the \emph{focal varieties} of $f$ in \cite{Wa87}, and $C^{t_1}_3(f)=C^{t_2}_3(f)$ for any two regular level hypersurfaces which will be thus denoted simply by $C_3(f)$. Furthermore, one can find that $C_3(f)\subset C_1(f)=C_2(f)$ in general. For transnormal functions on general Riemannian manifolds,
Wang proved the following fundamental result:
\begin{thm}\label{tube}(\cite{Wa87})
Let $N$ be a connected complete Riemannian manifold and $f$ a transnormal function on $N$. Then
\begin{itemize}
\item[a)] The focal varieties of $f$ are smooth submanifolds (may be disconnected) of $N$;
\item[b)] Each regular level set of $f$ is a tube over either of the focal varieties (the dimensions of the fibers may differ on different connected components).
\end{itemize}
\end{thm}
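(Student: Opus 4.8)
The plan is to reduce everything to the gradient flow of $f$ and then to analyze the collapse of the regular level sets onto the extreme-value sets by a Jacobi-field argument. First I would check that, wherever $\nabla f\neq 0$, the unit field $\nu=\nabla f/|\nabla f|$ has geodesic integral curves. This follows from the identity $\nabla_{\nabla f}\nabla f=\tfrac12\nabla|\nabla f|^2=\tfrac12 b'(f)\,\nabla f$, which shows $\nabla f$ is geodesic up to reparametrization and hence that $\nabla_\nu\nu=0$. Fixing a regular level $M:=f^{-1}(t_0)$ and writing $\gamma_p(s)=\exp_p(s\nu(p))$ as in (\ref{normal exp}), the function $\phi(s):=f(\gamma_p(s))$ then satisfies the autonomous ODE $\phi'(s)=\sqrt{b(\phi(s))}$, whose solution depends only on $t_0$ and not on the point $p\in M$. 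This uniformity is the crux of the first half: all normal geodesics carry $f$ through the same values at the same arc lengths, so the level sets $M_t$ are mutually parallel and $\phi$ increases monotonically until $b$ vanishes, i.e.\ until $f$ attains its extreme values $t_\pm$ on $C_1(f)=C_2(f)$.

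Next I would use completeness of $N$ to extend each $\gamma_p$ to all $s\in\mathbb{R}$; the geodesics then reach $C_+:=f^{-1}(t_+)$ and $C_-:=f^{-1}(t_-)$ at the finite arc lengths $s_\pm=\int_{t_0}^{t_\pm}\frac{dt}{\sqrt{b(t)}}$. The smooth map $\Phi\colon M\times\mathbb{R}\to N$, $\Phi(p,s)=\exp_p(s\nu(p))$, restricts for each regular $s\in(s_-,s_+)$ to a diffeomorphism of $M$ onto $M_{\phi(s)}$, while $\Phi(\cdot,s_\pm)$ maps $M$ into $C_\pm$. Both assertions of the theorem thereby reduce to showing that the two endpoint maps collapse $M$ cleanly onto smooth submanifolds with round geodesic-sphere fibers.

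The hardest step will be this focal collapse. To control $d\Phi(\cdot,s_+)$ I would study the normal Jacobi fields along each $\gamma_p$, whose initial data are fixed by the shape operator $A$ of $M$ through $\langle AX,Y\rangle=-H_f(X,Y)/|\nabla f|$ (equation (\ref{shape-hessian})). Because $f$ is transnormal, the shape operators of the parallel family obey a Riccati equation along $\gamma_p$ whose coefficients are governed solely by $b$; hence the manner in which the principal curvatures diverge as $s\to s_+$ is dictated by the single function $b$ and is uniform over $M$, with exactly $c_+-1$ of them blowing up, where $c_+=\operatorname{codim}C_+$. Equivalently, $f$ is Morse--Bott near $C_+$: its Hessian is negative semidefinite with kernel of locally constant dimension, the transnormal equation forcing the negative directions to be precisely the $c_+$ normal ones. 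Constant rank then realizes $C_+$ as an embedded submanifold with $T_qC_+=\ker H_f(q)$ and exhibits $\Phi(\cdot,s_+)\colon M\to C_+$ as a submersion whose fibers are the unit geodesic spheres $S^{c_+-1}$ of the normal spaces, i.e.\ $M$ is a tube over $C_+$. The identical argument at $s_-$ treats $C_-$, and since $c_\pm$ may differ on different connected components of $C_\pm$ the fiber dimensions vary accordingly, as stated.
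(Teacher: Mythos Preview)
The paper does not prove this theorem: it is quoted verbatim from Wang \cite{Wa87} and used as a black box, so there is no ``paper's own proof'' to compare against. What the paper does record from \cite{Wa87}, and what is decisive for the argument, is the Hessian identity at the extreme level sets (called Lemma~6 of \cite{Wa87} in the proof of Theorem~\ref{proper equiv-def}): differentiating $|\nabla f|^2=b(f)$ twice and evaluating at a critical point $q\in M_-$ yields $2H_f^2=b'(\alpha)\,H_f$, so the eigenvalues of $H_f(q)$ are exactly $0$ and $\tfrac12 b'(\alpha)$. This single algebraic fact simultaneously gives $b'(\alpha)\neq 0$ (hence finiteness of your arc length $s_-$), the Morse--Bott structure of $f$ along $M_-$, and the identification of the normal space of $M_-$ with the nonzero eigenspace of $H_f$---from which the tube statement follows.

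Your outline is headed in the right direction up to the collapse step, but the mechanism you propose there does not work as written. The Riccati equation for the shape operators along $\gamma_p$ reads $A'+A^2+R_\nu=0$, and the curvature term $R_\nu$ is a feature of the ambient manifold $N$, not of $b$; so neither the coefficients nor the blow-up pattern of the principal curvatures are ``governed solely by $b$'' or uniform over $M$ in a general Riemannian manifold. Your fallback assertion that ``the transnormal equation forces the negative directions to be precisely the $c_+$ normal ones'' is exactly the right statement, but it is circular as phrased (you invoke $c_+=\operatorname{codim}C_+$ before $C_+$ is known to be a submanifold) and it needs the Hessian computation above as its justification. Replace the Riccati paragraph by the second-derivative identity $2H_f^2=b'(t_\pm)H_f$ at critical points; that gives you constant rank, the Morse--Bott normal form, finiteness of $s_\pm$, and the submersion $\Phi(\cdot,s_\pm)$ onto a smooth $C_\pm$ all at once.
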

Let $[\alpha,\beta]\subset\mathbb{R}$ denote the image of $f$ where $\alpha$ or $\beta$ may be the infinity. When $\alpha$ (resp. $\beta$) doesn't equal to the infinity, we set $M_{-}=M_{\alpha}=f^{-1}(\alpha)$ (resp. $M_{+}=M_{\beta}=f^{-1}(\beta)$) and $\emptyset$ otherwise. Thus $C_1(f)=C_2(f)=M_{-}\cup M_{+}$, and Theorem \ref{tube} states that $M_{\pm}$ are smooth submanifolds of $N$ (though may be disconnected and may have different dimension at each component) and $M_t$ is a tube over either of $M_{\pm}$ for any $t\in(\alpha,\beta)$. Furthermore, we observe the following
\begin{thm}\label{proper equiv-def}
Each component of $M_{\pm}$ has codimension not less than $2$ if and
only if $C_3(f)=C_1(f)=C_2(f)$. Moreover in this case, each level
set $M_t$ is connected. If in addition $N$ is closed and $f$ is
isoparametric, then at least one isoparametric hypersurface is
minimal in $N$.
\end{thm}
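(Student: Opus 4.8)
The plan is to prove the three assertions in turn, using throughout Theorem~\ref{tube} and the inclusion $C_3(f)\subseteq C_1(f)=C_2(f)=M_-\cup M_+$ noted before the statement. For the equivalence, observe that since $C_3(f)\subseteq M_-\cup M_+$ always, the asserted equality is the same as $M_-\cup M_+\subseteq C_3(f)$, i.e.\ as every connected component $W$ of $M_\pm$ being contained in $C_3(f)$; so it suffices to establish the dichotomy that a component $W$ of codimension $k$ lies in $C_3(f)$ if and only if $k\ge 2$. A component of codimension $0$ would be open in $N$, while it is also closed in $N$ (the focal varieties coincide with the closed set $C_2(f)=\{\nabla f=0\}$), hence would be all of $N$, contradicting that $f$ is non-constant; so the only alternative to $k\ge 2$ is $k=1$. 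If $k=1$, Theorem~\ref{tube} shows that the part of a regular level set $M_t$ lying over $W$ is an $S^{0}$-bundle over $W$, so the normal exponential map, restricted to it at the focal radius, is a local diffeomorphism onto $W$ and, together with the transverse radial direction, a local diffeomorphism onto a neighbourhood of $W$ in $N$; one concludes $W\cap C_3(f)=\emptyset$. If on the other hand $k\ge 2$ for every component, then by Theorem~\ref{tube} the part of $M_t$ over a component $W$ is an $S^{k-1}$-bundle whose fibres the normal exponential map at the focal radius collapses to the corresponding points of $W$, and since a smooth map constant on a positive-dimensional connected set has singular differential along it, every point of such a $W$ is a singular value; thus $M_-\cup M_+\subseteq C_3(f)$, and equality follows.

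Assume now that every component of $M_\pm$ has codimension $\ge 2$. Then $N^{\circ}:=N\setminus(M_-\cup M_+)$ is obtained from the connected manifold $N$ by deleting a closed submanifold of codimension $\ge 2$, hence is connected. On $N^{\circ}$ we have $|\nabla f|^2=b(f)>0$, so $\nabla f/b(f)$ is a well-defined vector field there; being a reparametrization of $\nabla f/|\nabla f|$, whose integral curves are geodesics of the complete manifold $N$, its flow exists as long as $f$ stays in $(\alpha,\beta)$, and since it increases $f$ at unit speed it yields a diffeomorphism $M_{t_0}\times(\alpha,\beta)\to N^{\circ}$ for any fixed regular value $t_0$. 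Connectedness of $N^{\circ}$ forces $M_{t_0}$ to be connected, and as the tube projection $M_{t_0}\to M_\pm$ of Theorem~\ref{tube} is a continuous surjection, $M_\pm$ is connected as well; hence every level set of $f$ is connected.

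Finally suppose in addition that $N$ is closed and $f$ is isoparametric. Then $f$ attains its extrema, so $[\alpha,\beta]$ is a finite interval and $M_\pm$ are nonempty compact submanifolds of codimension $\ge 2$. Put $W(t):=\mathrm{Vol}(M_t)$ for $t\in(\alpha,\beta)$. Writing the area element of $M_t$ as the Jacobian of the normal exponential map applied to a fixed $M_{t_0}$ and differentiating, one gets $W'(t)/W(t)=-H(t)/\sqrt{b(t)}$, where $H(t)$ is the mean curvature of $M_t$, which is constant on $M_t$ (as $f$ is isoparametric) and equals $\big(\tfrac12 b'(t)-a(t)\big)/\sqrt{b(t)}$ by (\ref{iso1}), (\ref{iso2}) and (\ref{shape-hessian}); since $W$ and $b$ are positive on $(\alpha,\beta)$, the hypersurface $M_{t_0}$ is minimal precisely when $W'(t_0)=0$. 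On the other hand $W$ is smooth and positive on $(\alpha,\beta)$, while as $t\to\beta^-$ (resp.\ $t\to\alpha^+$) the hypersurface $M_t$ is a shrinking tube over the compact codimension-$\ge 2$ submanifold $M_+$ (resp.\ $M_-$), so the elementary tube-volume estimate gives $W(t)\to 0$ at both ends. Therefore $W$ attains an interior maximum at some $t_0\in(\alpha,\beta)$, where $W'(t_0)=0$, and the corresponding isoparametric hypersurface $M_{t_0}$ is minimal in $N$.

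I expect the main obstacle to be the codimension-one case of the dichotomy: certifying that a codimension-one component of $M_\pm$ contains no focal point of a regular level set requires a careful study of the normal exponential map near such a component --- notably ruling out contributions from normal geodesics that leave and later return to a neighbourhood of it --- which is exactly where one must use the fine tube structure underlying Theorem~\ref{tube}; by comparison the first-variation identity and the $t\to\alpha^+,\beta^-$ asymptotics of $\mathrm{Vol}(M_t)$ in the last step are routine once that structure is in hand.
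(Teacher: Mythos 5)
Your proofs of the equivalence and of connectedness are essentially the paper's own: the same dichotomy on the codimension of a component of $M_{\pm}$ (with the codimension-$0$ case correctly excluded), the same observation that positive-dimensional fibre spheres collapsing under the normal exponential map force every point of $M_{\pm}$ to be a focal point while an $S^0$-bundle gives a local diffeomorphism, and the same identification $N\setminus(M_-\cup M_+)\cong M_{t_0}\times(\alpha,\beta)$. Where you genuinely diverge is the minimality step. The paper works directly with the mean curvature $h(t)=\frac{1}{2\sqrt{b(t)}}\bigl(b'(t)-2a(t)\bigr)$: it invokes Lemma~6 of Wang to compute the Hessian eigenvalues of $f$ on $M_{\pm}$, deduces $a(\alpha)=\tfrac12 b'(\alpha)\,\mathrm{Codim}\,M_-$ and $a(\beta)=\tfrac12 b'(\beta)\,\mathrm{Codim}\,M_+$, and then uses $b'(\alpha)>0$, $b'(\beta)<0$ together with $\mathrm{Codim}\,M_{\pm}\ge 2$ to see that $b'-2a$ changes sign on $(\alpha,\beta)$, so the intermediate value theorem produces a minimal level. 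You instead integrate: setting $W(t)=\mathrm{Vol}(M_t)$, the first-variation formula and (\ref{shape-hessian}) give $W'(t)=0$ exactly when $h(t)=0$, and the tube-volume asymptotics over the compact codimension-$\ge 2$ focal varieties force $W(t)\to 0$ at both ends of the interval, so an interior maximum of $W$ does the job. Both arguments are correct and use the codimension hypothesis in an essential way (in your version it is what makes $W\to 0$; a codimension-one component would contribute a non-vanishing limit volume). Your route avoids Wang's Lemma~6 and the boundary derivatives $b'(\alpha)$, $b'(\beta)$ entirely, at the price of the first-variation identity and the endpoint volume estimates; the paper's route is more local and yields the extra quantitative information that $h$ has opposite signs near the two focal varieties. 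Your closing caveat about normal geodesics returning to a codimension-one component is a fair point about the converse direction of the equivalence, but the paper's own proof addresses it at exactly the same level of detail you do, relying on the tube structure of Theorem~\ref{tube}.
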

\begin{proof}
First we show the equivalence assertion. If each component of
$M_{\pm}$ has codimension not less than $2$, then any regular level
hypersurface $M_{t_0}$ is a tube over $M_{\pm}$ with at least $1$
dimensional fiber spheres, and therefore the normal exponential map
$exp: \nu(M)\cong M_{t_0}\times \mathbb{R}\rightarrow N$ defined in
(\ref{normal exp}) is critical whenever the $\mathbb{R}$-component
takes values of the (resp. minus) radius of the tube over $M_{+}$
(resp. $M_{-}$), which implies $C_3(f)=C_1(f)=C_2(f)$. The converse
is also true since the normal exponential map at the distance of the
radius of either tube is a submersion from $M_{t_0}$ to $M_{\pm}$.

When each component of $M_{\pm}$ has codimension not less than $2$,
$N-M_{-}\cup M_{+}\cong M_{t_0}\times(\alpha,\beta)$ is connected
and thus each regular level hypersurface is connected, which implies
that $M_{\pm}$ are connected.

At last, we come to show that, if $f$ is an isoparametric function
on a closed Riemannian manifold $N$ with codimension of $M_{\pm}$
(denoted by $Codim.M_{\pm}$) not less than 2, at least one
isoparametric hypersurface is minimal in $N$. Clearly $M_{\pm}$ are
not empty sets now. By formula (\ref{shape-hessian}), it's easily
calculated that the mean curvature $h(t)$ on the isoparametric
hypersurface $M_t=f^{-1}(t)$ is
$$h(t)=\frac{1}{2\sqrt{b(t)}}(b'(t)-2a(t)).$$ It follows from Lemma
6 of \cite{Wa87} that the eigenvalues of the Hessian $H_f$ of $f$ on
$M_{-}$ (resp. $M_{+}$) are zeros and $\frac{1}{2}b'(\alpha)$ (resp.
$\frac{1}{2}b'(\beta)$) with multiplicities being the dimension and
codimension of $M_{-}$ (resp. $M_{+}$) respectively, which implies
$$a(\alpha)=\triangle
f|_{M_{-}}=Trace(H_f)|_{M_{-}}=\frac{1}{2}b'(\alpha)Codim.M_{-},$$
$$a(\beta)=\triangle
f|_{M_{+}}=Trace(H_f)|_{M_{+}}=\frac{1}{2}b'(\beta)Codim.M_{+}.$$ Recalling
that $b'(\alpha)>0$ and $b'(\beta)<0$ proved in \cite{Wa87}, when
$Codim.M_{\pm}\geq2$, we have
$$b'(\alpha)-2a(\alpha)=b'(\alpha)(1-Codim.M_{-})<0,$$ $$b'(\beta)-2a(\beta)=b'(\beta)(1-Codim.M_{+})>0,$$
 which
confirms the existence of some $t_0\in(\alpha,\beta)$ such that
$$h(t_0)=\frac{1}{2\sqrt{b(t_0)}}(b'(t_0)-2a(t_0))=0.$$
\end{proof}
\begin{rem}\label{exceptional-isop}
Define $f: S^3\rightarrow \mathbb{R}$ by $f(x_0,x_1,x_2,x_3)=x_0^2$. Then direct calculations show that $f$ is isoparametric with $M_{-}=S^2$ and $M_{+}=\{(\pm1,0,0,0)\}$, while $C_3(f)=M_{+}\varsubsetneq C_1(f)=C_2(f)=M_{-}\cup M_{+}$. Clearly there're no minimal regular level hypersurfaces in this example. The isoparametric hypersurfaces $M_t\subset S^3$ here are disconnected, while their projections are just the spheres when we consider $f$ as an isoparametric function on $\mathbb{R}P^3$ though still $C_3(f)\varsubsetneq C_1(f)=C_2(f)$. But we have
\end{rem}
\begin{prop}\label{simplyconn-prop}
Suppose that $N$ is simply connected and $f$ is a transnormal function on $N$ with one regular hypersurface $M_{t_0}=f^{-1}(t_0)$ connected. Then $C_3(f)=C_1(f)=C_2(f)$.
\end{prop}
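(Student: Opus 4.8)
The plan is to argue by contraposition of the equivalence already established in Theorem \ref{proper equiv-def}: it suffices to show that if some component of $M_{\pm}$ has codimension $1$, then $N$ cannot be simply connected while also having a connected regular level hypersurface. So suppose, say, a component $M_{-}^0$ of $M_{-}$ has codimension $1$ in $N$. By Wang's tube structure (Theorem \ref{tube}), the regular level set $M_{t_0}$ is a tube over $M_{-}$; over a codimension-$1$ component the fibre is a $0$-sphere, i.e. two points, so $M_{t_0}$ has (at least) a double cover onto $M_{-}^0$. The first thing I would do is make this picture precise: describe a neighbourhood of $M_{-}^0$ as the total space of the normal disc bundle, whose boundary is the portion of $M_{t_0}$ lying over $M_{-}^0$, and note that since the normal bundle $\nu(M_{t_0})$ is trivial (globally oriented by $\nabla f/|\nabla f|$) the two-point fibre bundle $M_{t_0}|_{M_{-}^0}\to M_{-}^0$ is classified by a class in $H^1(M_{-}^0;\mathbb{Z}/2)$.

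Next I would split into the two cases for this double cover. If the restriction $M_{t_0}|_{M_{-}^0}\to M_{-}^0$ is the trivial double cover, then $M_{t_0}$ is disconnected (it has at least two sheets over $M_{-}^0$, plus whatever sits over the rest of $M_{\pm}$), contradicting the hypothesis that $M_{t_0}$ is connected. If instead it is a nontrivial connected double cover, then — using the decomposition $N = (\text{tube over }M_{-})\cup_{M_{t_0}}(\text{tube over }M_{+})$ provided by Theorem \ref{tube} and collapsing each tube onto its core by the deformation retraction along geodesics normal to $M_{t_0}$ — I would build a nontrivial element of $H^1(N;\mathbb{Z}/2)$, or equivalently a nontrivial connected double cover of $N$, contradicting $\pi_1(N)=1$. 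Concretely: the nontrivial double cover of $M_{-}^0$ extends (pull back along the retraction, then glue) to a double cover of all of $N$ that is still nontrivial because its restriction to a loop in $M_{-}^0$ detecting the monodromy is nontrivial; this contradicts simple connectivity.

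The main obstacle, and the step I would spend the most care on, is the gluing/extension argument in the nontrivial-cover case: I must check that the double cover of $N$ obtained by patching the pulled-back cover over the $M_{-}$-tube with the corresponding cover over the $M_{+}$-tube along $M_{t_0}$ is consistent (the transition data agree on the overlap $M_{t_0}$) and genuinely nontrivial on $N$, not merely on $M_{-}^0$. This hinges on the fact that the inclusion $M_{-}^0\hookrightarrow (\text{tube})$ is a homotopy equivalence and that the retraction identifies $\pi_1(M_{t_0}|_{M_{-}^0})$ faithfully inside $\pi_1(N)$ through this component. One should also handle the orientation bookkeeping carefully: although $N$ may be non-orientable, what is used is only the triviality of $\nu(M_{t_0})$, which holds for any transnormal function, so the $\mathbb{Z}/2$ argument goes through regardless. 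Once the contradiction is reached in both cases, Theorem \ref{proper equiv-def} yields $C_3(f)=C_1(f)=C_2(f)$, completing the proof.
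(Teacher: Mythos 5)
Your argument is correct in outline but takes a genuinely different route from the paper's. The paper disposes of the codimension-one case in one stroke by citing Milnor--Stasheff (Theorem 11.3 and Corollary 11.4): a hypersurface that is closed as a subset of a simply connected manifold cannot be non-orientable, hence the offending component of $M_{\pm}$ is orientable in the orientable manifold $N$, hence has trivial normal bundle, hence its normal $S^0$-bundle --- which is a union of components of $M_{t_0}$ --- is disconnected, contradicting connectedness of $M_{t_0}$. You instead split on whether the $S^0$-bundle over the codimension-one component is the trivial or the nontrivial double cover, and in the nontrivial case you try to manufacture a connected double cover of $N$ by hand. That buys self-containedness (no appeal to characteristic-class duality) at the price of the gluing step you rightly flag as the main obstacle. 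That step does go through, but for a specific reason you should make explicit: first, since the tube over $M_{-}$ is a disjoint union of tubes over its components and $M_{t_0}$ is connected, in the nontrivial case $M_{-}$ equals the single component $M_{-}^0$ and $M_{t_0}\to M_{-}^0$ is itself the connected double cover; second, the pullback of this cover to the $M_{-}$-tube restricts on the boundary $M_{t_0}$ to $M_{t_0}\times_{M_{-}^0}M_{t_0}$, which is \emph{canonically trivial} (the diagonal is a section), so one glues in two disjoint copies of the $M_{+}$-tube and obtains a connected double cover of $N$. Without this observation there is no a priori reason the cover over $M_{t_0}$ extends across the $M_{+}$-tube. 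An even cleaner repair, closer in spirit to the Milnor--Stasheff argument, is to skip the gluing entirely: mod $2$ intersection number with the closed hypersurface $M_{-}^0$ defines a homomorphism $\pi_1(N)\to\mathbb{Z}/2$ that is nontrivial when the normal bundle is nonorientable (push off an orientation-reversing loop and close it through a fibre to get a loop meeting $M_{-}^0$ once), contradicting simple connectivity. With either repair your two cases both yield the contradiction, and the conclusion $C_3(f)=C_1(f)=C_2(f)$ follows as you say from the equivalence in Theorem \ref{proper equiv-def}.
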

\begin{proof}
From the proof of Corollary 11.4 and Theorem 11.3 in \cite{MS74}, we
know that there exists no non-orientable hypersurface closed as a
subset embedded in a simply connected manifold. So if
$C_3(f)\varsubsetneq C_1(f)=C_2(f)$, or equivalently, some component
of $M_{\pm}$ is a hypersurface (which is clearly closed as a
subset), then this hypersurface must be orientable and its normal
sphere bundle is the trivial $S^0$ bundle which is obviously
disconnected. On the other hand, recall that $M_{t_0}$ is
diffeomorphic to the normal sphere bundle of either of $M_{\pm}$ and
thus is disconnected, which contradicts the assumption that
$M_{t_0}$ is connected.
\end{proof}
Remark \ref{exceptional-isop} shows that exceptional isoparametric
functions with focal varieties not really focal could exist.
Therefore, from now on, we call a transnormal (isoparametric)
function $f$ \emph{proper} if the focal varieties have codimension
not less than $2$, or equivalently, $C_3(f)=C_1(f)=C_2(f)$. Then
Proposition \ref{simplyconn-prop} asserts that a transnormal
function on a simply connected manifold with one regular level
hypersurface connected is proper. In conclusion, all level sets of a
properly transnormal function $f$ defined on a (connected)
Riemannian manifold $N$ are connected. Furthermore, when $f$ is a
properly isoparametric function,
$\{M_t=f^{-1}(t)|t\in(\alpha,\beta)\}$ consists of a family of
parallel hypersurfaces with constant mean curvature in $N$ (and at
least one minimal if $N$ is closed) and will be called a family of
\emph{properly isoparametric hypersurfaces}, and $M_{\pm}$ are
called the \emph{focal submanifolds}. Thus the level sets of $f$
give a ``singular" foliation of $N$ as $N=\bigcup_{t\in
[\alpha,\beta]}M_t$. Similar to Theorem C asserted in \cite{Wa87},
there is essentially a correspondence between (properly)
isoparametric functions on $N$ and such ``singular" foliations of
$N$ as the union of a family of (properly) isoparametric
hypersurfaces and focal submanifolds (\emph{i.e.}, families of
parallel constant mean curvature hypersurfaces that together with at
most two common focal submanifolds would fill up the whole manifold
$N$, in other words, transnormal systems of codimension 1 on $N$
with all regular foils having constant mean curvature). As is well
known that when $N$ is a real space form, this definition of
isoparametric hypersurface coincides with the usual one that a
hypersurface is isoparametric if it has constant principal
curvatures, since a hypersurface with all its neighboring parallel
translations having constant mean curvature has constant principal
curvatures and vice versa. A startling corollary of Theorem
\ref{tube} in \cite{Wa87} is (see \cite{Miy10} for a detailed
proof):
\begin{thm}(\cite{Wa87})\label{trans-isop-sphere}
Regular level hypersurfaces of a (properly) transnormal function on
$S^n$ or $\mathbb{R}^n$ are isoparametric.
\end{thm}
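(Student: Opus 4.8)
The plan is to show that each regular level hypersurface $M=M_{t_0}=f^{-1}(t_0)$ has constant principal curvatures, which is the classical meaning of ``isoparametric''; once this holds one checks that $f$ itself satisfies $\triangle f=a(f)$, so the statement follows. The first ingredient is that $N^n=S^n$ or $\R^n$ is a space form of curvature $c$: along a unit-speed normal geodesic $\gamma_p(s)=\exp_p(s\nu)$ of $M$, the shape operators $A_s$ of the parallel family $\{M_t\}$ obey the Riccati equation $A_s'=A_s^2+c\,\mathrm{Id}$, whence the principal curvatures of every $M_t$ are explicit M\"obius functions of those of $M$ at $p$, the principal directions are parallel along $\gamma_p$, and a principal curvature $\kappa_i(p)$ of $M$ produces a focal point of $M$ on $\gamma_p$ at arclength $\operatorname{arccot}\kappa_i(p)$ (at $1/\kappa_i(p)$ when $c=0$), with multiplicity that of $\kappa_i(p)$.

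The second ingredient comes from transnormality together with Theorem \ref{tube}. Setting $\phi_p(s):=f(\gamma_p(s))$, equation (\ref{iso1}) and the fact that the integral curves of $\nabla f/|\nabla f|$ are geodesics give $\phi_p'=\sqrt{b(\phi_p)}$, $\phi_p(0)=t_0$, so $\phi_p$ is one and the same function of $s$ for every $p\in M$ (the endpoint behaviour of $b$ handled as in \cite{Wa87}). Hence $M$ is a tube of a single fixed radius over $M_+$ and over $M_-$; in $S^n$ each normal geodesic of $M$ closes up as a great circle of length $2\pi$, and the conditions $\phi_p(2\pi)=t_0$, $\phi_p'(2\pi)=\phi_p'(0)$ force the common distance between $M_+$ and $M_-$ to be $\pi/g$ for a positive integer $g$ and make this common profile $(2\pi/g)$-periodic --- a M\"unzner-type constraint obtained here directly from transnormality. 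In particular the finite set of arclengths at which $\gamma_p$ meets $M_+\cup M_-$ does not depend on $p$; in the proper case Theorem \ref{proper equiv-def} identifies this set with the full set of focal points of $M$ along $\gamma_p$ and gives that $M$ and $M_\pm$ are connected.

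The crux, which I expect to be the main obstacle, is to upgrade this to constancy of the principal curvatures of $M$. Comparing the two descriptions of the focal data along $\gamma_p$ and using that $\operatorname{arccot}$ is injective on $(0,\pi)$, the distinct principal curvature values of $M$ are forced to be the fixed numbers $\cot\!\big(s_++\tfrac{i-1}{g}\pi\big)$, $i=1,\dots,g$, where $s_+$ is the fixed tube radius over $M_+$ --- exactly the classical M\"unzner values. It remains to keep the multiplicities, a priori only lower semicontinuous, from jumping and to prevent a point from distributing its focal points among the sheets of $M_\pm$ differently from its neighbours; I would settle this using the smoothness of the singular foliation of $N$ and of $M_\pm$, the smoothness and parallelism of the principal distributions along the normal flow, and the connectedness of $M$ and of $M_\pm$ (Theorem \ref{proper equiv-def}), which pin the multiplicity of each focal point to the fixed dimension of a fixed component of $M_+$ or $M_-$. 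The $\R^n$-case has no closing-up but is easier: the focal set of $M$ being the submanifold $M_+\cup M_-$ (at most one of the two nonempty in $\R^n$) forces the tube's fibre directions to carry the single principal curvature $1/s_+$ and all remaining directions to be non-focal, hence of zero principal curvature, so $M$ is a round sphere, a hyperplane or a spherical cylinder. Finally the non-proper case on $S^n$, where a component of $M_\pm$ may itself be a hypersurface, reduces after discarding that codimension-one component to tubes over lower-dimensional focal submanifolds, i.e.\ to the situation already treated; see \cite{Miy10} for the complete details.
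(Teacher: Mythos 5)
The paper does not actually prove this theorem: it is quoted from \cite{Wa87}, with the detailed proof delegated to \cite{Miy10}, so there is no internal argument to compare yours against. Your sketch is, in substance, the known Wang--Miyaoka argument, and its skeleton is sound: on one side the space-form dictionary (Riccati equation, focal points of $M$ along $\gamma_p$ at $\operatorname{arccot}\kappa_i(p)$ mod $\pi$), on the other the fact that $\phi_p(s)=f(\gamma_p(s))$ solves the $p$-independent ODE $\phi'=\pm\sqrt{b(\phi)}$ (with uniqueness at the turning points supplied by Wang's lemma on $b$), so the arclengths at which $\gamma_p$ meets $M_+\cup M_-$ form one fixed set $\{s_++kL\}$; since $C_3(f)\subset C_1(f)$ always, every focal distance lies in this set, which pins the distinct principal curvature values to the fixed list $\cot(s_++kL)$. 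Two places in your write-up deserve to be made explicit, both settled by the same mechanism. First, the converse inclusion --- that every passage through $M_\pm$ \emph{is} a focal point along that particular geodesic --- does not follow formally from the set equality $C_3=C_1$ (equality of critical values does not make every preimage a critical point); it follows because the end-point map $\eta_{s_++kL}$ factors as a diffeomorphism onto a nearby regular level set (parallel displacement through non-focal distances) followed by the tube projection of Theorem \ref{tube} onto $M_\pm$, hence is a submersion with fibres of dimension $\operatorname{codim}M_\pm-1\geq 1$. Second, this same factorization is what closes the multiplicity gap you correctly identify as the crux: in a space form $\ker d\eta_s$ at $p$ is exactly the $\cot(s)$-eigenspace of the shape operator, so its dimension equals the fibre dimension $\operatorname{codim}M_\pm-1$, which is a single constant because Theorem \ref{proper equiv-def} makes $M_\pm$ connected; hence the multiplicities cannot jump and need no separate semicontinuity argument. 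With these two points spelled out your proof is complete for the proper case on $S^n$ and for $\R^n$; the improper case you, like the paper, ultimately defer to \cite{Miy10}, which is where the genuinely delicate bookkeeping (a codimension-one component of $M_\pm$, disconnected regular level sets) resides, so that final reduction should be regarded as a citation rather than a proof.
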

We will show specifically in Section 4 a properly transnormal
function on the Gromoll-Meyer sphere which is not isoparametric.

Now given an (properly) isoparametric function $f$ on an $n$
dimensional Riemannian manifold $(N^n, ds^2)$ satisfying equation
(\ref{iso1}) and (\ref{iso2}), we consider a \emph{special conformal
deformation} of the given Riemannian metric $ds^2$ of $N$ by
$\widetilde{ds^2}=e^{2u(f)}ds^2$, where $u:\mathbb{R}\rightarrow
\mathbb{R}$ is a smooth function. A straightforward verification
shows that $f$ is still isoparametric on $(N,\widetilde{ds^2})$ with
\begin{eqnarray}
|\widetilde{\nabla}f|^2&=&e^{-2u(f)}b(f), \nonumber\\
\widetilde{\triangle}f~~~~&=&e^{-2u(f)}((n-2)u'(f)b(f)+a(f)),\nonumber
\end{eqnarray}
where $\widetilde{\nabla}f$, $\widetilde{\triangle}f$ denote the gradient and Laplacian of $f$ with respect to $\widetilde{ds^2}$. As a consequence, we have
 \begin{prop}\label{special conformal}
 There always exist infinite Riemannian metrics admitting (properly) isoparametric functions on a fixed manifold once there exists one.
 \end{prop}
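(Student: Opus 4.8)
The plan is to exploit the \emph{special conformal deformations} just introduced. Suppose $f$ is a (properly) isoparametric function on $(N^n,ds^2)$ with $|\nabla f|^2=b(f)$ and $\triangle f=a(f)$, and for a smooth $u:\mathbb{R}\rightarrow\mathbb{R}$ set $\widetilde{ds^2}=e^{2u(f)}ds^2$. The two identities displayed just before the statement already show that $f$ is isoparametric on $(N,\widetilde{ds^2})$, with $\widetilde b(f)=e^{-2u(f)}b(f)$ and $\widetilde a(f)=e^{-2u(f)}\big((n-2)u'(f)b(f)+a(f)\big)$. Hence every metric in the family $\{\,e^{2u(f)}ds^2:u\in C^{\infty}(\mathbb{R})\,\}$ admits $f$ as an isoparametric function, and it remains only to verify (a) that properness is inherited, and (b) that the family is infinite.

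For (a) I would invoke Theorem \ref{proper equiv-def}, by which properness of $f$ is equivalent to every component of $M_{\pm}=f^{-1}(\alpha),\,f^{-1}(\beta)$ having codimension at least $2$. Since $\alpha=\inf f$ and $\beta=\sup f$ do not depend on the metric, $M_{\pm}$ are the same subsets of $N$ whether computed for $ds^2$ or for $\widetilde{ds^2}$; and since $f$ is transnormal for both metrics, Theorem \ref{tube} presents $M_{\pm}$ as smooth submanifolds in either case, necessarily of the same codimension. Therefore $f$ proper on $(N,ds^2)$ implies $f$ proper on each $(N,\widetilde{ds^2})$ as well, so the whole family consists of metrics admitting $f$ as a (properly) isoparametric function.

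For (b), if one only asks for infinitely many metric \emph{tensors} this is immediate, since any two smooth $u_1,u_2$ already differing on the image of $f$ yield $e^{2u_1(f)}ds^2\neq e^{2u_2(f)}ds^2$. If instead one wants infinitely many metrics that are pairwise non-isometric, I would restrict to the homotheties $u\equiv c$, $c\in\mathbb{R}$: when $N$ is closed these have total volume $e^{nc}\,\mathrm{Vol}(N,ds^2)$, which distinguishes them, while for noncompact $N$ one may instead let $u$ vary nonconstantly and separate the metrics by a curvature quantity, for instance the scalar curvature along the unit normal geodesics of the $M_t$'s. The computations in (a) and (b) are routine; the only mildly delicate point — and essentially the sole obstacle — is fixing the precise sense in which ``infinitely many'' is meant and supplying the corresponding distinguishing invariant. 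Modulo that, the proposition is a direct rereading of the displayed computation together with the metric-independence of $\alpha,\beta$ and of the smooth submanifolds $M_{\pm}$.
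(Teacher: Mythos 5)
Your proposal is correct and follows exactly the paper's route: the proposition is stated there as an immediate consequence of the displayed formulas for $|\widetilde{\nabla}f|^2$ and $\widetilde{\triangle}f$ under the special conformal deformation $\widetilde{ds^2}=e^{2u(f)}ds^2$, with no further argument given. Your additional remarks on why properness is inherited (the focal varieties and their codimensions are metric-independent) and on the sense in which the family is infinite are sensible elaborations of details the paper leaves implicit, not a different method.
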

\begin{rem}\label{special conformal in Sn}
 Note that when $(N^n,ds^2)$ is a real space form, under a special conformal deformation with respect to an isoparametric function $f$, $(N^n,\widetilde{ds^2}=e^{2u(f)}ds^2)$ may not remain to be a real space form, but the isoparametric hypersufaces $M_t=f^{-1}(t)$ still have constant principal curvatures which could be derived from formula (\ref{shape-hessian}) and the equality $$\widetilde{H_f}(X,Y)=H_f(X,Y)+u'(f)b(f)\langle X, Y\rangle-2u'(f)X(f)~Y(f),$$ where $X,Y$ are tangent vectors of $N^n$, $\langle,\rangle$ is the metric $ds^2$ and $H_f$, $\widetilde{H_f}$ are the Hessians of $f$ under $ds^2$, $\widetilde{ds^2}$ respectively.
\end{rem}
The following two propositions give us effective ways to construct
examples of (properly) isoparametric functions (explicit functions
or implicitly represented by a family of isoparametric
hypersurfaces) on certain Riemannian manifolds. Recall that a
connected manifold $N$ is said to have cohomogeneity one if it
supports a smooth action by a compact Lie group $G$, such that the
orbit space $N/G$ is one-dimensional. We observe the following
\begin{prop}\label{cohom-one-isop}
In a cohomogeneity one manifold $(N^n,G)$ with a $G$-invariant metric, the principal orbits consist of a family of isoparametric hypersurfaces with constant principal curvatures. Moreover, when the non-principal orbits have codimension not less than $2$, they are proper, \emph{i.e.}, the corresponding isoparametric functions with all orbits as their level sets are proper.
\end{prop}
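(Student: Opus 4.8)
The plan is threefold: (i) show, by homogeneity alone, that each principal orbit is a hypersurface with constant principal curvatures and that the orbits form a mutually parallel family; (ii) produce the corresponding isoparametric function explicitly by composing a suitable function on the one-dimensional orbit space with $\pi$; (iii) deduce the ``proper'' clause from Theorem~\ref{proper equiv-def}. For a cohomogeneity one manifold the orbit space $N/G$ is one of $\mathbb{R}$, $S^1$, $[0,\infty)$, or a compact interval $[\alpha,\beta]$, with the non-principal orbits lying over the boundary points; I would carry out the argument for $N/G=[\alpha,\beta]$ (so $N$ is closed with two non-principal orbits $B_\pm$), the remaining cases being analogous. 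Since $G$ acts by isometries and a principal orbit $M$ is a homogeneous hypersurface on which $G$ acts transitively by ambient isometries preserving $M$, the shape operator of $M$ (for a fixed normal) has the same eigenvalues at every point, so $M$ has constant principal curvatures. Taking a geodesic $c$ issuing orthogonally to $M$ and using that along $c$ the inner product with any Killing field of the $G$-action is constant, hence identically $0$, one sees $c$ stays orthogonal to every orbit it meets; thus the orbits through the points of $c$ are mutually parallel and, by the slice theorem and the standard structure theory of cohomogeneity one actions, exhaust the regular part $N_{\mathrm{reg}}$.

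Next I would set up the function. Parametrize $c$ by arclength on $(\alpha,\beta)$ with $c(\alpha)\in B_-$, $c(\beta)\in B_+$, and let $t\colon N_{\mathrm{reg}}\to(\alpha,\beta)$ send a point to the parameter of its orbit; then $|\nabla t|\equiv1$, $\pi|_{N_{\mathrm{reg}}}$ is a Riemannian submersion onto an interval, and the integral curves of $\nabla t$ are unit-speed geodesics, so $\nabla_{\nabla t}\nabla t=0$. Put $k=\pi/(\beta-\alpha)$ and $f:=\cos(k(t-\alpha))$. A tubular neighbourhood of $B_-$ (resp. $B_+$) is a normal disc bundle on which $t-\alpha$ (resp. $\beta-t$) is the fibre radius and the orbits are the distance spheres; since $\cos(ks)$ is a smooth function of $s^2$ and $\cos(\pi+\cdot)=-\cos(\cdot)$, $f$ is a smooth function of the squared normal coordinate near each $B_\pm$ and so extends to a smooth, non-constant function on all of $N$ that takes distinct values on distinct orbits, so its level sets are exactly the orbits. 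From $|\nabla t|\equiv1$ one gets $|\nabla f|^2=k^2\sin^2(k(t-\alpha))=k^2(1-f^2)=:b(f)$, hence $f$ is transnormal. By (\ref{shape-hessian}) with $|\nabla t|\equiv1$ and $\nabla_{\nabla t}\nabla t=0$, $\triangle t$ equals minus the trace of the shape operator of the orbit through the point, which is constant along that homogeneous orbit, so $\triangle t=\eta(t)$ depends only on $t$; and since $t\mapsto f$ is strictly monotone on $[\alpha,\beta]$, $\triangle f=-k^2\cos(k(t-\alpha))-k\sin(k(t-\alpha))\,\eta(t)=:a(f)$ is well defined on $(-1,1)$. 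Near $B_-$ one has $\eta(t)=\triangle(\mathrm{dist}_{B_-})=(m_--1)/(t-\alpha)+O(1)$, where $m_-=\mathrm{codim}\,B_-$, while $\sin(k(t-\alpha))=k(t-\alpha)+O((t-\alpha)^3)$, so $a(f)\to-k^2m_-$ as $f\to1$ (similarly at $f=-1$); thus $a$ extends continuously and $f$ is isoparametric with the orbits as level sets.

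Finally, for the ``moreover'': the singular set $C_2(f)=\{\nabla f=0\}$ is exactly $B_-\cup B_+$, since $\sin(k(t-\alpha))\ne0$ for $t\in(\alpha,\beta)$ and $f$ attains its extrema on $B_\pm$; that is, $M_\mp=B_\pm$. If the non-principal orbits have codimension $\ge2$, then Theorem~\ref{proper equiv-def} applies and gives $C_3(f)=C_1(f)=C_2(f)$, i.e.\ $f$ is proper. For $N/G=\mathbb{R}$ or $[0,\infty)$ one argues the same way taking $f=t$ or $f=(t-\alpha)^2$ respectively, and for $N/G=S^1$ there need be no global isoparametric function but the orbits still form a parallel family of constant-principal-curvature hypersurfaces. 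I expect the main technical obstacle to be the regularity bookkeeping at the non-principal orbits: the smoothness of $f=\cos(k(t-\alpha))$ across $B_\pm$, which forces the choice of $\cos$ (a smooth function of the squared normal coordinate) and uses the disc-bundle structure of a tubular neighbourhood (and its twisted analogue should an exceptional orbit occur), together with the continuity of $a$ at $f=\pm1$, which hinges on matching the blow-up rate $(m_\pm-1)/\mathrm{dist}$ of $\triangle t$ against the linear vanishing of $\sin(k(t-\alpha))$. Everything else is either standard cohomogeneity one structure theory or a direct appeal to the results already established in this section.
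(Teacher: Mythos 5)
Your proposal is correct and follows essentially the same route as the paper, which merely asserts that the conclusions ``follow directly from the $G$-invariance of the metric'' (citing Takagi--Takahashi and Grove--Ziller) and remarks that the isoparametric function is obtained by composing a smooth function on $N/G$ with the projection $\pi$. You have simply supplied the details the paper delegates to its references: the explicit choice $f=\cos\bigl(k(t-\alpha)\bigr)$, the smoothness check across the non-principal orbits via the squared normal coordinate, and the matching of the $(m_\pm-1)/\mathrm{dist}$ blow-up of $\triangle t$ against the linear vanishing of $\sin\bigl(k(t-\alpha)\bigr)$ to get continuity of $a$ at the focal values -- all of which is sound.
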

\begin{proof}
The desiring conclusions follow directly from the $G$-invariance of the
Riemannian metric (see also \cite{TT72}, \cite{GZ02}). In fact the
corresponding isoparametric functions could be defined by taking
composition of some smooth functions on $N/G$ with the projection
$\pi:N\rightarrow N/G$.
\end{proof}
\begin{prop}\label{isop-lift}
Let $\pi: E\rightarrow B$ be a Riemannian submersion with minimal
fibers. Then given any (properly) isoparametric function $f$ on $B$,
$F:=f\circ \pi$ is an (properly) isoparametric function on $E$.
\end{prop}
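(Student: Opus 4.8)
The plan is to verify the two defining conditions (\ref{iso1}) and (\ref{iso2}) for $F=f\circ\pi$ directly from the structure equations of a Riemannian submersion, and then to handle the adjective ``properly'' by comparing the singular sets of $F$ and $f$. Throughout, write $\nabla_E,\triangle_E$ (resp.\ $\nabla_B,\triangle_B$) for the gradient and Laplacian on $E$ (resp.\ $B$). The first observation, which needs nothing about the fibers, is that $\nabla_E F$ is the horizontal lift of $\nabla_B f$: for horizontal $X$ one has $dF(X)=df(d\pi X)=\langle\nabla_B f,d\pi X\rangle$, while $dF$ annihilates vertical vectors, and since $d\pi$ restricts to a linear isometry on each horizontal space it follows that $|\nabla_E F|^2=|\nabla_B f|^2\circ\pi=b(f)\circ\pi=b(F)$. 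As $\pi$ is onto, $F$ is non-constant and its image equals that of $f$, so $b$ is defined there; hence $F$ is transnormal.

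Next, for the Laplacian the key identity is
\begin{equation}\label{laplacian-sub}
\triangle_E F=(\triangle_B f)\circ\pi-\langle\mathcal H,\nabla_E F\rangle,
\end{equation}
where $\mathcal H$ denotes the (unnormalized) mean curvature vector field of the fibers of $\pi$. To prove it I would pick a local orthonormal frame $\bar e_1,\dots,\bar e_m$ on $B$, take its horizontal lifts $X_1,\dots,X_m$, adjoin a local orthonormal vertical frame $V_1,\dots,V_k$, and expand $\triangle_E F=\sum_i\bigl(X_iX_iF-(\nabla^E_{X_i}X_i)F\bigr)+\sum_j\bigl(V_jV_jF-(\nabla^E_{V_j}V_j)F\bigr)$. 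Since $F$ is constant along fibers, $V_jF=0$; since $\nabla_E F$ is horizontal, only the horizontal components of $\nabla^E_{X_i}X_i$ and $\nabla^E_{V_j}V_j$ survive when applied to $F$; and O'Neill's equations identify the horizontal component of $\nabla^E_{X_i}X_i$ with the horizontal lift of $\nabla^B_{\bar e_i}\bar e_i$, while $\sum_j(\nabla^E_{V_j}V_j)^{\mathrm{hor}}=\mathcal H$ by definition of the second fundamental form of the fiber (whose normal space is the horizontal space). Collecting terms yields (\ref{laplacian-sub}). Now minimality of the fibers means $\mathcal H\equiv0$, so $\triangle_E F=(\triangle_B f)\circ\pi=a(f)\circ\pi=a(F)$, and $F$ is isoparametric on $E$.

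For the ``proper'' clause, note that $\nabla_E F$ vanishes exactly over the zero set of $\nabla_B f$, whence $C_2(F)=\pi^{-1}(C_2(f))=\pi^{-1}(M_-)\cup\pi^{-1}(M_+)$. Since $\pi$ is a submersion, the preimage of a codimension-$c$ submanifold of $B$ is a codimension-$c$ submanifold of $E$, so each component of the focal varieties of $F$ has codimension $\geq2$ precisely when each component of $M_\pm$ does. By the equivalence in Theorem \ref{proper equiv-def} this codimension condition is the same as $C_3(F)=C_1(F)=C_2(F)$; hence if $f$ is proper then so is $F$ (and, $\pi$ being onto, conversely).

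The only step requiring genuine computation is the submersion Laplacian formula (\ref{laplacian-sub}); the care needed there is essentially bookkeeping — separating the horizontal and vertical parts of $\nabla^E_{X_i}X_i$ and $\nabla^E_{V_j}V_j$ and fixing the sign and normalization of the fiber mean curvature — after which the hypothesis that the fibers are minimal is exactly what makes the correction term in (\ref{laplacian-sub}) vanish, giving the result.
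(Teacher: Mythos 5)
Your proposal is correct and follows essentially the same route as the paper: the paper's proof consists precisely of the two identities $|\widetilde{\nabla}F|^2=|\nabla f|^2$ and $\widetilde{\triangle}F=\triangle f\circ\pi-\langle H,\widetilde{\nabla}F\rangle$, which you state and additionally justify by the horizontal-lift and frame computations. Your extra paragraph on the ``properly'' clause (identifying $C_2(F)=\pi^{-1}(C_2(f))$ and noting that a submersion preserves codimension) is a detail the paper leaves implicit, and it is handled correctly.
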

\begin{proof}
Let $\nabla$, $\triangle$ (resp. $\widetilde{\nabla}$, $\widetilde{\triangle}$) denote the gradient and laplace operator on $B$ (resp. $E$), and $H$ the mean curvature vector fields of the fibers. The assertion follows from
$$|\widetilde{\nabla}F|^2=|\nabla f|^2,\quad \quad \quad \widetilde{\triangle}F=\triangle f\circ\pi-\langle H, \widetilde{\nabla} F\rangle.$$
\end{proof}
To conclude this section, now we'll apply the above two propositions to construct examples of isoparametric functions on some interesting manifolds, where we would also state them in terms of isoparametric hypersurfaces and focal submanifolds instead of isoparametric functions for the sake of geometrical viewpoint.
\begin{examp}\label{Brieskorn-examp}
Isoparametric hypersurfaces in Brieskorn varieties.
\end{examp}
One of the $(2n-1)$-dimensional Brieskorn varieties $V^{2n-1}\subset
S^{2n+1}\subset \mathbb{C}^{n+1}$ is defined by the equation
$z_0^d+z_1^2+\cdots +z_n^2=0$ for $(z_0,\cdots,z_n)\in  S^{2n+1}$.
For certain odd $n$ and $d$ (for instance, $n+1\neq 2^k$ for any $k$ and $d\equiv \pm3~mod~8$ ),
the Brieskorn variety is diffeomorphic to an exotic Kervaire
sphere. It's well known that the Brieskorn variety carries a
cohomogeneity one action by $SO(2)SO(n)$ defined by
$$(e^{i\theta},A)(z_0,\cdots,z_n)=(e^{2i\theta}z_0,e^{id\theta}(z_1,\cdots,z_n)A),$$
whose non-principal orbits have codimensions $2$ and $n-1$ (cf.
\cite{HH67}). Thus by Proposition \ref{cohom-one-isop}, the
principal orbits consist of a family of isoparametric hypersurfaces
in the Brieskorn variety. Note that the Brieskorn variety is
diffeomorphic to $S^5$ when $n=3$ and $d$ is odd, hence there are
infinitely many rather different Riemannian metrics on $S^5$ (of
non-negative curvature, cf. \cite{GZ00}) that admit properly
isoparametric functions with corresponding isoparametric
hypersurfaces having constant principal curvatures (compare with
Remark \ref{special conformal in Sn}).

Next we show a very interesting construction.
\begin{examp}\label{isop in isop}
Isoparametric hypersurfaces in isoparametric hypersurfaces of
spheres.
\end{examp}
Recall that Cartan (\cite{Ca38}, \cite{Ca39}) classified all isoparametric hypersurfaces in spheres with $3$ distinct principal curvatures. Such isoparametric hypersurface must be a tube of constant radius over a standard Veronese embedding of a
projective plane $\mathbb{F}P^2$ into $S^{3m+1}$, where $\mathbb{F}$ is the division algebra $\mathbb{R}$, $\mathbb{C}$, $\mathbb{H}$
(quaternions), $\mathbb{O}$ (Cayley numbers) for $m = 1, 2, 4, 8$, respectively. Let $f: S^{3m+1}\rightarrow \mathbb{R}$ be the restriction to $S^{3m+1}$ of the corresponding Cartan polynomial. Then $M^{3m}:=f^{-1}(0)$ is the isoparametric hypersurface with $3$ distinct constant principal curvatures $\cot\frac{\pi}{6}, \cot\frac{\pi}{2}, \cot\frac{5\pi}{6}$ of multiplicities $m$ and $M_{-}\cong M_{+}\cong \mathbb{F}P^2$. Furthermore, direct calculations show that the focal map $\varphi:M\rightarrow M_{-}$ defined by parallel translation of $M$ at distance $\frac{\pi}{2}$ in direction $\nu=\nabla f/|\nabla f|$ is horizontally homothetic, more precisely, we have $$|\varphi_{*}(X)|=\sqrt{3}|X|\quad \emph{for any horizontal vector}\quad X\in (Ker(\varphi_{*}))^{\perp}\subset TM.$$ Moreover, the fibers are totally geodesic (see Corollary 4.12 in \cite{CR85}). Finally we could apply Proposition \ref{isop-lift} to get isoparametric hypersurfaces $\widetilde{M}_t$ in $M^{3m}$ by taking inverse of isoparametric hypersurfaces $\overline{M}_t:=\bar{f}^{-1}(t)\subset\mathbb{F}P^2$ for any isoparametric function $\bar{f}$ on $\mathbb{F}P^2$ under the projection $\varphi$, \emph{i.e.}, $\widetilde{M}_t=\varphi^{-1}(\overline{M}_t)\subset M^{3m}$. Now let's take a look at the case of $m=1$ and the case of $m=8$ for example:
\begin{itemize}
\item[(1)] $m=1$. Define $\bar{f}: \mathbb{R}P^2\rightarrow \mathbb{R}$ by $\bar{f}([x_0,x_1,x_2]):=x_0^2$. Then a simple calculation shows that $\bar{f}$ is an improperly isoparametric function with $\overline{M}_{-}=\mathbb{R}P^1\cong S^1$, $\overline{M}_{+}=\{[1,0,0]\}$ and $\overline{M}_t=\bar{f}^{-1}(t)\cong S^1$ for $t\in(0,1)$. Thus $\bar{f}\circ \varphi$ is an improperly isoparametric function on $SO(3)/\mathbb{Z}_2\oplus \mathbb{Z}_2\cong M^3\subset S^4$ with the focal varieties $\widetilde{M}_{-}=\varphi^{-1}(\overline{M}_{-})\cong K^2$, $\widetilde{M}_{+}=\varphi^{-1}(\overline{M}_{+})\cong S^1$ and the isoparametric hypersurface $\widetilde{M}_t=\varphi^{-1}(\overline{M}_t)\cong T^2$ for $t\in(0,1)$, where $K^2$ is the Klein bottle and $T^2$ is the torus.\\
\item[(2)] $m=8$. Recall that $\mathbb{O}P^2\cong F_4/Spin(9)$ and there are two cohomogeneity one actions on it by $Spin(9)\subset F_4$ and $(Sp(3)\times Sp(1))/\mathbb{Z}_2\subset F_4$ respectively (cf. \cite{Tang98}). Thus from Proposition \ref{cohom-one-isop} we get two properly isoparametric functions on $\mathbb{O}P^2$ with the focal varieties being $(\overline{M}_{-}\cong Spin(9)/Spin(8)\cong S^8, \overline{M}_{+}\cong Spin(9)/Spin(9)=\{pt\})$ and $(\overline{M}_{-}\cong S^{11}, \overline{M}_{+}\cong \mathbb{H}P^2)$ respectively, and the isoparametric hypersurfaces for the former case are $\overline{M}_t\cong Spin(9)/Spin(7)\cong S^{15}$. Therefore we can get two properly isoparametric functions on the isoparametric hypersurface $M^{24}\subset S^{25}$. The corresponding properly isoparametric hypersurfaces in the former case are $\widetilde{M}_t=\varphi^{-1}(S^{15})\cong S^{15}\times S^8\subset M^{24}$ with one focal submanifold $\widetilde{M}_{+}=\varphi^{-1}(\{pt\})\cong S^8$ and another topologically an $S^8$-bundle over $S^8$. Similar cases occur on $\mathbb{C}P^2$ and $\mathbb{H}P^2$ if one consider cohomogeneity one actions on them.
\end{itemize}
     \begin{rem}
     Notice that the Hopf fibration $\pi: S^{3m-1}\rightarrow \mathbb{F}P^2$ ($m=1,2,4$) is a Riemannian submersion with totally geodesic fibres and thus isoparametric functions on $\mathbb{F}P^2$ can also be lifted to isoparametric functions on $S^{3m-1}$. Conversely, we can get isoparametric functions on $\mathbb{F}P^2$ by projecting those $S^{m-1}$ invariant isoparametric functions on $S^{3m-1}$ for $m=1,2,4$. For instance, the inhomogeneous examples of Ozeki and Takeuchi \cite{OT75} are invariant under the canonical $S^3$-action and hence also invariant under the canonical $S^1$-action, which give examples for $m=4$ and $m=2$ respectively. See more examples and studies about isoparametric hypersurfaces in projective spaces in \cite{Tak75}, \cite{Pa89} and \cite{Wa82}.
     \end{rem}
     \begin{rem}\label{iterative lift}
     In fact, for any sphere bundle $\pi:E\rightarrow B$ over a Riemannian manifold $B$, one can modify the metric of $E$ such that $\pi$ could become a Riemannian submersion with totally geodesic fibres. For instance, one can firstly use the Kaluza-Klein procedure to define a metric $g_{KK}$ on $P$, the associate principal bundle $\tilde{\pi}:P\rightarrow B$, which is constructed by declaring the horizontal and vertical subspaces orthogonal, and giving the vertical space a biinvariant metric on the structure group $G$ through the connection and the horizontal space the metric of $B$ via $\tilde{\pi}_*$ (cf. \cite{Bo88}, \cite{Je73}, \cite{LY74}). Then given a $G$-invariant metric $g_F$ on the fibre $F$ of $\pi$, the direct product $g:=g_{KK}\times g_F$ is a $G$-invariant metric on $P\times F$. Finally, identifying $E$ with $(P\times F)/G$, by horizontal projection, one induces from $g$ a Riemannian metric on $E$ in desire (see \cite{Po75} or Proposition 3.1 in \cite{Na79}). So one can always lift isoparametric functions on $B$ to get isoparametric functions on $E$, which can be (iteratively) applied to the focal map between an arbitrary isoparametric hypersurface and its focal submanifold though the metrics may altered. Since an isoparametric function give a ``singular" foliation of the manifold, such iterations of lifting above give an ``iterated" ``singular" foliation structure of the manifold.
     \end{rem}
\section{Non-existence on ``exotic" 4-spheres}\label{section-4sphere}
 By a homotopy n-sphere we mean a smooth manifold with the same homotopy type as $S^n$. Freedman \cite{Fr82} showed a remarkable result that any homotopy 4-sphere is homeomorphic to $S^4$; however the question of whether such a manifold is necessarily diffeomorphic to $S^4$ (\emph{i.e.}, whether there is an exotic $S^4$? This is essentially the smooth Poincar\'{e} conjecture in dimension four.) still remains mysterious to mathematicians (cf. \cite{JW08}). In this section we show a small step towards this conjecture, \emph{i.e.}, Theorem \ref{homotopy 4-sphere} stated in the introduction, which asserts that a homotopy 4-sphere $\Sigma^4$ must be diffeomorphic to $S^4$ if it admits a properly transnormal function.

 Our first task is to determine all possibilities of the two focal varieties that a properly transnormal function would give on the standard $S^4$, where we arrange $M_{\pm}$ such that the codimension of $M_{+}$ is not less than that of $M_{-}$.
\begin{prop}\label{focal in S4}
Let $f: S^4\rightarrow \mathbb{R}$ be a properly transnormal
function. Then the focal varieties $(M_{+}, M_{-})$ and the regular
level hypersurfaces $M_t$ must be one of the following
\begin{itemize}
\item[(1)] $(\{pt\}, \{pt\})$,\quad $S^3$;
\item[(2)] $(S^1, S^2)$, \quad$S^1\times S^2$;
\item[(3)] $(\mathbb{R}P^2,\mathbb{R}P^2)$,\quad $SO(3)/\mathbb{Z}_2\oplus\mathbb{Z}_2$.
\end{itemize}
\end{prop}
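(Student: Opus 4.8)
The plan is to read off the topology of the two focal submanifolds $M_\pm$ and of one regular leaf $M:=M_{t_0}$ from the transnormal splitting $S^4=D_-\cup_M D_+$, using essentially only the two global invariants $\chi(S^4)=2$ and $\pi_1(S^4)=1$. As a setup: since $f$ is proper, Theorem~\ref{proper equiv-def} tells us $M_\pm$ are connected closed submanifolds of $S^4$ of codimension $\ge 2$, so $\dim M_\pm\le 2$, and $M$ is a connected closed $3$-manifold; by Theorem~\ref{tube}, $M$ is (diffeomorphic to) the unit normal sphere bundle of $M_-$ in $S^4$, hence a sphere bundle over $M_-$ with fibre $S^{3-\dim M_-}$, and likewise a sphere bundle over $M_+$ with fibre $S^{3-\dim M_+}$; and $D_\pm$, the normal disk bundle of $M_\pm$, deformation retracts onto $M_\pm$ with $D_-\cap D_+$ a collar of $M$.

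First I would extract the Euler-characteristic constraint. Mayer--Vietoris applied to $S^4=D_-\cup D_+$ gives $2=\chi(S^4)=\chi(M_-)+\chi(M_+)-\chi(M)$, and $\chi(M)=0$ because $M$ is a closed odd-dimensional manifold, so $\chi(M_-)+\chi(M_+)=2$. Recalling that a closed surface $\Sigma$ has $\chi(\Sigma)\le 2$, with $\chi(\Sigma)=2$ only for $S^2$ and $\chi(\Sigma)=1$ only for $\mathbb{R}P^2$, while $\chi(S^1)=0$ and $\chi(\{pt\})=1$, a short case check on $(\dim M_-,\dim M_+)$ (under the normalization $\dim M_-\ge\dim M_+$) leaves only three families of survivors: $\dim M_-=0$, whence $M_\pm=\{pt\}$ and $M$ is a distance $3$-sphere in $S^4$ (this is case (1)); $\dim M_-=2$ with $\dim M_+\le 1$, whence $(M_+,M_-)\in\{(\{pt\},\mathbb{R}P^2),(S^1,S^2)\}$; and $\dim M_-=\dim M_+=2$, whence $\{M_-,M_+\}=\{\mathbb{R}P^2,\mathbb{R}P^2\}$ or $\{S^2,\Sigma\}$ with $\chi(\Sigma)=0$, i.e. $\Sigma\cong T^2$ or the Klein bottle $K^2$. (The possibility $\dim M_-=1$ is excluded outright, since then $\chi(M_-)+\chi(M_+)\le 1$.)

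Next I would kill the two spurious survivors with fundamental-group arguments on the sphere-bundle structure of $M$. If $(M_+,M_-)=(\{pt\},\mathbb{R}P^2)$, then $M$ is simultaneously a distance $3$-sphere (so $\pi_1(M)=1$) and a circle bundle over $\mathbb{R}P^2$; but the homotopy exact sequence of the latter forces $\pi_1(M)$ to surject onto $\pi_1(\mathbb{R}P^2)=\mathbb{Z}_2\ne 1$, a contradiction. If $\{M_-,M_+\}=\{S^2,\Sigma\}$ with $\Sigma\in\{T^2,K^2\}$, then since $S^2$ and $S^4$ are orientable the relevant normal plane bundle is orientable, so $M$ is an \emph{oriented} circle bundle over $S^2$ and hence $\pi_1(M)$ is abelian of rank at most one (trivial, finite cyclic, or $\mathbb{Z}$); on the other hand $M$ is a circle bundle over $\Sigma$, so $\pi_1(M)$ surjects onto $\pi_1(\Sigma)$, which is $\mathbb{Z}^2$ (rank $2$) if $\Sigma=T^2$ and non-abelian if $\Sigma=K^2$ --- impossible in either case. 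This leaves exactly $(M_+,M_-)\in\{(\{pt\},\{pt\}),(S^1,S^2),(\mathbb{R}P^2,\mathbb{R}P^2)\}$.

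It then remains to name the leaf $M$. In case (1), $M\cong S^3$. In case (2), $M$ is at once an oriented circle bundle over $S^2$ and an $S^2$-bundle over $S^1$; the latter forces $\pi_1(M)=\mathbb{Z}$, and among oriented circle bundles over $S^2$ only $S^1\times S^2$ has infinite $\pi_1$, so $M\cong S^1\times S^2$. In case (3), $M$ is a circle bundle over $\mathbb{R}P^2$, and here I would invoke Theorem~\ref{trans-isop-sphere}: $M$ is a compact isoparametric hypersurface of $S^4$, hence by Cartan's classification (\cite{Ca38},\cite{Ca39}; cf.\ \cite{CR85}) it is diffeomorphic to $S^3$ ($g=1$), $S^1\times S^2$ ($g=2$), or $SO(3)/\mathbb{Z}_2\oplus\mathbb{Z}_2$ ($g=3$); with both focal varieties $\cong\mathbb{R}P^2$ it must be the last, the Cartan hypersurface. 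I expect this final identification to be the only real obstacle: recovering the $3$-manifold $M$ in the $(\mathbb{R}P^2,\mathbb{R}P^2)$ case without the isoparametric classification would amount to computing the Euler number of the normal bundle of a Veronese $\mathbb{R}P^2\subset S^4$, i.e.\ showing $\pi_1(M)\cong Q_8$ so that $M\cong S^3/Q_8\cong SO(3)/\mathbb{Z}_2\oplus\mathbb{Z}_2$. One could in fact shortcut the whole argument by applying Theorem~\ref{trans-isop-sphere} at the very start, but I would keep the Euler-characteristic and $\pi_1$ analysis above, since it uses only $\chi=2$ and $\pi_1=1$ and therefore transfers verbatim to the homotopy $4$-sphere setting of Theorem~\ref{homotopy 4-sphere}.
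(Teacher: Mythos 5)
Your argument is correct, but it takes a genuinely different route from the paper's. The paper's proof of Proposition~\ref{focal in S4} is two sentences: by Theorem~\ref{trans-isop-sphere} the regular level sets are isoparametric hypersurfaces of the round $S^4$, and the list then follows at once from the classification of isoparametric hypersurfaces in $S^4$ with $g=1,2,3$ distinct principal curvatures. You instead postpone the metric input and first run a purely topological analysis of the decomposition $S^4=D_-\cup_M D_+$: Mayer--Vietoris gives $\chi(M_-)+\chi(M_+)=2$, which prunes the dimension combinatorics, and the incompatible sphere-bundle structures on $M$ (cyclic $\pi_1$ from an oriented circle bundle over $S^2$ versus a surjection onto $\pi_1(T^2)$, $\pi_1(K^2)$ or $\pi_1(\mathbb{R}P^2)$) eliminate the spurious pairs; only the final identification of $M$ in the $(\mathbb{R}P^2,\mathbb{R}P^2)$ case falls back on Cartan's classification. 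What your approach buys is exactly what you say at the end: since it uses only $\chi=2$ and $\pi_1=1$, it transfers to homotopy $4$-spheres, where Wang's theorem is unavailable. This is in effect an elementary substitute for what the paper does in Section~\ref{section-4sphere} by quoting M\"unzner's cohomological Theorem~\ref{munzner} to constrain $H^*(M_{\pm 1})$ and $H^*(M_{t_0})$ on $\Sigma^4$; note, though, that for the transfer you would still need the paper's appeal to Massey and Levine (as in Corollary~\ref{3cases}) to pin down the normal circle bundle of $\mathbb{R}P^2\subset\Sigma^4$, since the Cartan classification you invoke in case (3) is special to the round metric on $S^4$.
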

\begin{proof}
 Theorem \ref{trans-isop-sphere} tells us that the regular hypersurfaces $M_t=f^{-1}(t)$ are isoparametric hypersurfaces in $S^4$ and $(M_{-}, M_{+})$ are just the focal submanifolds. Thus the conclusion follows from the classification of isoparametric hypersurfaces in $S^4$ with $1$, $2$ or $3$ distinct principal curvatures.
\end{proof}
Now let's recall a topological theorem of M\"{u}nzner
\cite{Mu80}. (See \cite{CR85}, p289.)
\begin{thm}(\cite{Mu80})\label{munzner}
Let $M$ be a compact connected hypersurface in $S^{n+1}$ such that:
\begin{itemize}
\item[(a)] $S^{n+1}$ is divided into two manifolds $(B_1,M)$ and $(B_{-1},M)$ with boundary $M$.
\item[(b)] For $k=\pm1$, $B_k$ has the structure of a differentiable ball bundle over a compact manifold $M_k$ of dimension $n-m_k$.
\end{itemize}
Let the ring of coefficients $R=\mathbb{Z}$ if $M_{\pm1}$ are both orientable and $\mathbb{Z}_2$ otherwise. Let $\mu=m_1+m_{-1}$. Then $\alpha:=2n/\mu$ is an integer, and for $k=\pm1$,
$$H^q(M_k)=\left\{
        \begin{array}{l}
          R ~\quad for~ q\equiv0~~ \quad \quad (mod ~\mu),~ 0\leqslant q<n;\\
          R ~\quad for~ q\equiv m_{-k}\quad (mod ~\mu),~ 0\leqslant q<n;\\
          0 ~\quad otherwise.
        \end{array}
  \right.$$
  Further,
  $$H^q(M)=\left\{
        \begin{array}{l}
          R ~\quad for~ q=0, ~n;\\
          H^q(M_1)\oplus H^q(M_{-1}), ~\quad for~ 1\leqslant q\leqslant n-1.
        \end{array}
  \right.$$
\end{thm}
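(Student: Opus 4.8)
The plan is to extract everything from two classical tools: the Mayer--Vietoris sequence of the decomposition $S^{n+1}=B_1\cup B_{-1}$, and the Thom isomorphisms of the two normal disk bundles $B_k\to M_k$ (equivalently, the Gysin sequences of the associated $S^{m_k}$-bundles $\partial B_k=M\to M_k$). Work throughout with coefficients in the prescribed ring $R$; the purpose of that convention is that when $R=\mathbb Z$ both $M_{\pm1}$ are orientable, hence so are their normal bundles in the orientable sphere $S^{n+1}$, so the Thom isomorphism and Poincar\'e duality are always available. Since $B_k$ deformation retracts onto its zero section $M_k$ one has $H^\ast(B_k)\cong H^\ast(M_k)$, and since $M$ is the total space of a sphere bundle over $M_k$ with $M$ connected, each $M_k$ is connected. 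From the Mayer--Vietoris sequence
$$\cdots\to H^q(S^{n+1})\to H^q(M_1)\oplus H^q(M_{-1})\to H^q(M)\to H^{q+1}(S^{n+1})\to\cdots$$
and the vanishing of $H^q(S^{n+1})$ for $0<q<n+1$, one reads off $H^q(M)\cong H^q(M_1)\oplus H^q(M_{-1})$ for $1\le q\le n-1$, while $H^0(M)=H^n(M)=R$ by connectedness and duality ($M=\partial B_1$ is orientable, so the case $R=\mathbb Z$ is fine). This is the last formula in the statement.

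For the formula for $H^\ast(M_k)$, the key is a degree-shifting isomorphism between $M_1$ and $M_{-1}$. Excision identifies $H^q(S^{n+1},B_k)\cong H^q(B_{-k},M)$, and the Thom isomorphism of the rank-$(m_{-k}+1)$ disk bundle $B_{-k}\to M_{-k}$ gives $H^q(B_{-k},M)\cong H^{q-m_{-k}-1}(M_{-k})$. Feeding this into the long exact sequence of the pair $(S^{n+1},B_k)$ and using again the vanishing of $H^\ast(S^{n+1})$ in the middle degrees yields $H^q(M_k)\cong H^{q-m_{-k}}(M_{-k})$ for $1\le q\le n-1$. Composing this for $k=1$ with its counterpart for $k=-1$ gives $H^q(M_k)\cong H^{q-\mu}(M_k)$ wherever both are defined: the cohomology of $M_k$ is $\mu$-periodic in the middle range.

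Now read it off. One has $H^0(M_k)=R$ and $H^{m_{-k}}(M_k)\cong H^0(M_{-k})=R$; periodicity propagates these to give $H^q(M_k)=R$ for every $q$ with $q\equiv 0$ or $q\equiv m_{-k}\pmod\mu$ in the range $0\le q<n$. For any other $q$ in that range, applying the shift isomorphism once (possibly after subtracting a multiple of $\mu$) lands in a negative degree, so $H^q(M_k)=0$; this is the first displayed formula. Finally, $M_k$ is a closed connected manifold of dimension $n-m_k$, so $H^{n-m_k}(M_k)\ne 0$ (here the coefficient convention is used once more), which forces $n-m_k\equiv 0$ or $m_{-k}\pmod\mu$ for $k=\pm1$. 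Unwinding these two congruences shows that either $n\equiv 0\pmod\mu$, or else $n\equiv m_1\equiv m_{-1}\pmod\mu$; in both cases $2n\equiv 0\pmod\mu$, i.e. $\alpha=2n/\mu$ is an integer.

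The structural steps above are routine once one commits to Mayer--Vietoris together with Thom/Gysin. The genuine difficulty, and where care is needed, is the bookkeeping hidden in the phrase ``in the middle range'': the shift isomorphisms are proved only in the band $1\le q\le n-1$, so extending $\mu$-periodicity across the full range $0\le q<n$, ruling out spurious cohomology near the two ends, and disposing of the degenerate configurations (some $m_k=0$, or $\mu\ge n$ so that periodicity is vacuous) all require a careful case analysis on the relative sizes of $m_1,m_{-1},\mu$ and $n$. Once that is dispatched, the modular arithmetic giving $\alpha\in\mathbb Z$ and both displayed cohomology formulas follow at once.
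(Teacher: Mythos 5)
The paper gives no proof of this statement: it is quoted verbatim as M\"unzner's theorem, with the reader referred to \cite{Mu80} and to p.~289 of \cite{CR85}. Your outline is, in substance, exactly M\"unzner's original argument as presented there: Mayer--Vietoris for $S^{n+1}=B_1\cup_M B_{-1}$ gives the formula for $H^q(M)$; excision plus the Thom isomorphism of the disk bundle $B_{-k}\to M_{-k}$, fed into the long exact sequence of $(S^{n+1},B_k)$, gives the shift $H^q(M_k)\cong H^{q-m_{-k}}(M_{-k})$ in the band $1\le q\le n-1$; composing yields $\mu$-periodicity, and the nonvanishing of the top class of the closed connected manifold $M_k$ forces $\mu\mid 2n$. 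The chain argument you defer is in fact shorter than you fear: iterating the shift downward from any $q\le n-1$ only ever decreases the degree, so every intermediate source degree automatically stays $\le n-1$, and the congruence hypotheses on $q$ decide whether the descending chain terminates at $0$ (giving $R$) or jumps past it into negative degree (giving $0$); the only genuinely degenerate case is $m_k=0$, which is excluded in M\"unzner's setting (and in the paper's application, where both focal varieties have codimension at least $2$). So the proposal is correct and coincides with the standard proof of the cited result.
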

 Observe that if a homotopy 4-sphere $\Sigma^4$ admits a properly transnormal function $f$, we know from Section 2 that every regular level hypersurface $M_t=f^{-1}(t)$ is connected and divides $\Sigma^4$ into two ball bundles over the focal varieties $M_{\pm1}$ of codimensions $m_{\pm1}+1\geqslant 2$. By a similar argument as the proof of Theorem \ref{munzner}, we have
\begin{prop}
Suppose a homotopy 4-sphere $\Sigma^4$ admits a properly transnormal
function $f$. Let $M_{t_0}$ be a regular level hypersurface of $f$
and $M_1$, $M_{-1}$ be the two focal varieties of codimensions
$m_{1}+1\geqslant m_{-1}+1(\geqslant 2)$ respectively. Then one of
the following happens:
\begin{itemize}
\item[(1)] $(m_1,m_{-1})=(3,3)$, \quad$(M_1,M_{-1})=(\{pt\},\{pt\})$,\quad $M_{t_0}\cong S^3$;
\item[(2)] $(m_1,m_{-1})=(2,1)$, \quad$H^0(M_1)=H^1(M_1)=\mathbb{Z}$, \quad$H^0(M_{-1})=H^2(M_{-1})=\mathbb{Z}$ and $H^1(M_{-1})=0$, \quad $H^0(M_{t_0})=H^1(M_{t_0})=H^2(M_{t_0})=H^3(M_{t_0})=\mathbb{Z}$;
\item[(3)] $(m_1,m_{-1})=(1,1)$, \quad$H^0(M_1)=H^1(M_1)=H^2(M_1)=\mathbb{Z}_2$, \quad$H^0(M_{-1})=H^1(M_{-1})=H^2(M_{-1})=\mathbb{Z}_2$, \quad$H^0(M_{t_0})=H^3(M_{t_0})=\mathbb{Z}_2$ and $H^1(M_{t_0})=H^2(M_{t_0})=\mathbb{Z}_2\oplus \mathbb{Z}_2$.
\end{itemize}
\end{prop}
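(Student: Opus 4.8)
The plan is to carry M\"{u}nzner's proof of Theorem \ref{munzner} over to the homotopy sphere $\Sigma^4$ in place of the standard sphere. I would start from the geometric picture recalled just above: since $f$ is properly transnormal, Theorem \ref{proper equiv-def} gives that every level set is connected, so the two focal varieties $M_{1},M_{-1}$ are connected closed submanifolds and the fixed regular level hypersurface $M_{t_0}$ is a connected closed hypersurface which, by Theorem \ref{tube}, splits $\Sigma^4$ into two smooth disk bundles $B_1,B_{-1}$, where $B_k$ is the normal disk bundle of $M_k$ (of rank the codimension $m_k+1\geqslant 2$) and $\partial B_k=M_{t_0}$ is the associated $S^{m_k}$-bundle over $M_k$. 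Since $\dim M_k=3-m_k\geqslant 0$ we have $1\leqslant m_k\leqslant 3$, so with the normalization $m_1\geqslant m_{-1}$ the a priori possibilities for $(m_1,m_{-1})$ are $(3,3),(3,2),(3,1),(2,2),(2,1),(1,1)$.

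The main step is to observe that M\"{u}nzner's proof of Theorem \ref{munzner} uses the ambient sphere only through its $R$-cohomology, never through any sharper property. Being a homotopy $4$-sphere, $\Sigma^4$ is simply connected---hence orientable---and $H^{*}(\Sigma^4;R)\cong H^{*}(S^4;R)$ for $R=\mathbb{Z}$ and for $R=\mathbb{Z}_2$. Consequently the Mayer--Vietoris sequence of $\Sigma^4=B_1\cup_{M_{t_0}}B_{-1}$ (using $B_k\simeq M_k$), the Gysin sequences of the two sphere bundles $S^{m_k}\hookrightarrow M_{t_0}\rightarrow M_k$, Poincar\'{e} duality on the closed $R$-orientable bases $M_{\pm1}$, and M\"{u}nzner's cup-product periodicity argument all run verbatim with $n=3$. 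The output is precisely the conclusion of Theorem \ref{munzner} with $S^{n+1}$ replaced by $\Sigma^4$ and $n=3$: in particular $\alpha:=6/\mu$ is an integer, where $\mu:=m_1+m_{-1}$, and the stated cohomology formulas hold for $M_{\pm1}$ and $M_{t_0}$ with the coefficient ring $R$ as prescribed there. I expect this transfer to be the only genuine obstacle; it is conceptually clear but does require running through M\"{u}nzner's argument and checking that no step secretly exploits the standard sphere beyond its cohomology ring and the two-disk-bundle decomposition.

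Granting that, the proof finishes by enumeration. Since $\mu\mid 6$ and $\mu\geqslant 2$, we get $\mu\in\{2,3,6\}$, which eliminates every pair above except $(3,3),(2,1),(1,1)$; note that M\"{u}nzner's formulas are legitimate with $\mathbb{Z}_2$-coefficients in all cases, which already determines the diffeomorphism type of each base $M_k$ and hence the correct ring $R$. If $\mu=6$, then $(m_1,m_{-1})=(3,3)$, the $M_k$ are connected $0$-manifolds, i.e. points, and $M_{t_0}$ is the $S^{3}$-bundle over a point, i.e. $S^3$; this is case (1). If $\mu=3$, then $(m_1,m_{-1})=(2,1)$, so $M_1$ is a closed connected $1$-manifold, hence $M_1\cong S^1$, while M\"{u}nzner's formula gives $H^1(M_{-1})=0$ on the closed connected surface $M_{-1}$, hence $M_{-1}\cong S^2$; both bases being orientable, $R=\mathbb{Z}$, and $H^q(M_{t_0})\cong H^q(M_1)\oplus H^q(M_{-1})$ for $q=1,2$ together with $H^0(M_{t_0})=H^3(M_{t_0})=\mathbb{Z}$ gives case (2). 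If $\mu=2$, then $(m_1,m_{-1})=(1,1)$ and M\"{u}nzner's formula gives $H^q(M_k;\mathbb{Z}_2)=\mathbb{Z}_2$ for $q=0,1,2$; the only closed surface with this $\mathbb{Z}_2$-cohomology is $\mathbb{R}P^2$, which is non-orientable, so $R=\mathbb{Z}_2$ and the same formulas give case (3). Beyond the transfer step, all of this is bookkeeping with the coefficient ring and the classification of closed $1$- and $2$-manifolds.
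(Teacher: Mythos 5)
Your proposal is correct and takes essentially the same route as the paper: the paper's entire proof is the one-line remark that the statement follows ``by a similar argument as the proof of Theorem \ref{munzner}'', i.e.\ by transplanting M\"unzner's cohomological argument to the two-disk-bundle decomposition of $\Sigma^4$ (which only uses that the ambient space is a cohomology $4$-sphere) and then enumerating the divisors of $2n=6$. Your write-up in fact supplies details the paper leaves implicit, such as the $\mathbb{Z}_2$-coefficient device to break the circularity in choosing $R$ and the appeal to the classification of closed $1$- and $2$-manifolds.
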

Because 2-dimensional manifolds are determined by their cohomology structures and $M_{t_0}$ is diffeomorphic to the normal sphere bundle over either of $M_{\pm1}$, we have
\begin{cor}\label{3cases}
With the same notations as above, one of the following happens:
\begin{itemize}
\item[(1)] $(M_1,M_{-1})=(\{pt\},\{pt\})$,\quad $M_{t_0}\cong S^3$;
\item[(2)] $(M_1,M_{-1})\cong(S^1,S^2)$,\quad $M_{t_0}\cong S^1\times S^2$;
\item[(3)] $(M_1,M_{-1})\cong(\mathbb{R}P^2,\mathbb{R}P^2)$,\quad $M_{t_0}\cong SO(3)/\mathbb{Z}_2\oplus\mathbb{Z}_2$.
\end{itemize}
\end{cor}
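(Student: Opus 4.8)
The plan is to convert the cohomological information of the preceding Proposition into honest diffeomorphism types, using two ingredients: a closed connected surface is determined up to diffeomorphism by its orientability together with its $\mathbb{Z}_2$-Betti numbers (equivalently, by its cohomology ring); and, by Theorem~\ref{tube}, the regular level $M_{t_0}$ is diffeomorphic to the unit normal sphere bundle of $M_1$ and also to that of $M_{-1}$. I will also use throughout that $\Sigma^4$ is orientable and that $M_{t_0}$, carrying the trivial oriented normal line bundle spanned by $\nabla f$, is orientable.

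In case~(1) one has $m_1=m_{-1}=3$, so $M_{\pm1}$ are closed connected $0$-manifolds, i.e.\ single points, and the normal sphere bundle (``tube'') over a point in a $4$-manifold is $S^3$; hence $M_{t_0}\cong S^3$. In case~(2) one has $m_1=2$, $m_{-1}=1$: then $M_1$ is a closed connected $1$-manifold, so $M_1\cong S^1$, while $M_{-1}$ is a closed connected surface with the integral cohomology of $S^2$ (in particular $H^2=\mathbb{Z}$, so orientable, and $H^1=0$), whence $M_{-1}\cong S^2$ by the classification of surfaces. Since $M_{t_0}$ is then the unit normal $S^2$-bundle over $M_1\cong S^1$, and such bundles over $S^1$ are classified by $\pi_0\,\mathrm{Diff}(S^2)\cong\mathbb{Z}_2$ with the nontrivial one having non-orientable total space (excluded above), we get $M_{t_0}\cong S^1\times S^2$.

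In case~(3) one has $m_1=m_{-1}=1$, so $M_{\pm1}$ are closed connected surfaces with $H^\ast(\,\cdot\,;\mathbb{Z}_2)=(\mathbb{Z}_2,\mathbb{Z}_2,\mathbb{Z}_2)$; since an orientable closed surface has even first $\mathbb{Z}_2$-Betti number and the non-orientable surface $N_k$ (connected sum of $k$ copies of $\mathbb{R}P^2$) has it equal to $k$, the only possibility is $M_{\pm1}\cong\mathbb{R}P^2$. It remains to identify the orientable $3$-manifold $M_{t_0}$, which is simultaneously the unit normal $S^1$-bundle over $M_1\cong\mathbb{R}P^2$ and over $M_{-1}\cong\mathbb{R}P^2$ and has the listed $\mathbb{Z}_2$-cohomology. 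The decisive extra constraint is that the two disk bundles $D(\nu M_1)$ and $D(\nu M_{-1})$ glue along $M_{t_0}$ into the homotopy $4$-sphere $\Sigma^4$; a van Kampen and Euler-characteristic computation for this splitting (using $w(T\Sigma^4)=1$ to pin down the normal bundles $\nu M_{\pm1}$), compared with the corresponding splitting of $S^4$ furnished by Proposition~\ref{focal in S4} and Example~\ref{isop in isop}(1), forces $M_{t_0}$ to be the $g=3$, $m=1$ isoparametric hypersurface $SO(3)/\mathbb{Z}_2\oplus\mathbb{Z}_2$ of $S^4$.

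Cases~(1) and~(2), and the surface classification in case~(3), are routine; the step that needs care is the identification of the $3$-manifold in case~(3), since its $\mathbb{Z}_2$-cohomology alone does \emph{not} distinguish $SO(3)/\mathbb{Z}_2\oplus\mathbb{Z}_2$ from, e.g., $S^1\times\mathbb{R}P^2$ or the higher-Euler-number circle bundles over $\mathbb{R}P^2$, so one genuinely has to feed in the global hypothesis that the two disk bundles assemble into a homotopy $4$-sphere and match the model sitting inside $S^4$.
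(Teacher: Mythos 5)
Your cases (1) and (2) are fine. Case (1) is immediate, and in case (2) your route --- identify $M_1\cong S^1$ and $M_{-1}\cong S^2$ from the cohomology, then observe that the normal $S^2$-bundle over $S^1$ must be the trivial one because its total space $M_{t_0}$ is orientable --- is correct and genuinely different from the paper, which instead regards $M_{t_0}$ as a circle bundle over $M_{-1}\cong S^2$, hence a lens space $L(n,1)$, and reads off $n=0$ from $H^1(M_{t_0})=\mathbb{Z}$ via James--Whitehead. Your version is, if anything, more elementary (a rank-$3$ bundle over $S^1$ is classified by $\pi_0O(3)$).

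The gap is in case (3), precisely at the step you yourself flag as delicate. The tools you name do not close it: the Euler-characteristic count for the splitting is the vacuous identity $1+1-0=2$, and $w(T\Sigma^4)=1$ only gives $w(\nu M_{\pm1})=(1+\alpha+\alpha^2)^{-1}=1+\alpha$, i.e.\ $w_1(\nu)=\alpha\neq0$ and $w_2(\nu)=0$. By Levine's classification \cite{Le63} (which the paper quotes), plane bundles over $\mathbb{R}P^2$ with $w_1\neq 0$ are distinguished by a twisted Euler number, and $w_2=0$ only forces it to be even; so the normal bundle is \emph{not} pinned down, and the candidate circle bundles for $M_{t_0}$ include $\mathbb{R}P^3\#\mathbb{R}P^3$ (twisted Euler number $0$) and the prism manifolds $S^3/Q_{4n}$ with $Q_{4n}$ generalized quaternion, several of which ($\mathbb{R}P^3\#\mathbb{R}P^3$, $S^3/Q_{16}$, \dots) have exactly the $\mathbb{Z}_2$-cohomology of the preceding Proposition and the $H_1\cong\mathbb{Z}_2\oplus\mathbb{Z}_2$ forced by Mayer--Vietoris. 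The paper closes this gap with Massey's theorem \cite{Ma74} that the normal Euler number of any embedded $\mathbb{R}P^2$ in $S^4$ (hence in $\Sigma^4$) is $\pm2$ --- a deep input, proved via the $G$-signature theorem, not by characteristic classes or van Kampen alone. Your van Kampen idea for the double disk-bundle decomposition can in fact be made to work without Massey, but only via an argument you do not supply: triviality of the pushout $\mathbb{Z}_2\leftarrow\pi_1(M_{t_0})\rightarrow\mathbb{Z}_2$ forces the two bundle projections to have \emph{distinct} kernels, each of which is a cyclic subgroup of index two (the image of the fiber $\pi_1(S^1)$); among the groups $D_\infty$ and $Q_{4n}$ arising from the admissible normal bundles, only $Q_8$ has two distinct cyclic index-two subgroups, which singles out $M_{t_0}\cong S^3/Q_8\cong SO(3)/\mathbb{Z}_2\oplus\mathbb{Z}_2$. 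As written, your proof asserts the conclusion of this (or of Massey's theorem) without providing either.
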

\begin{proof}
It suffices to verify each case of $M_{t_0}$. The fist case is
obvious. The second case comes from the fact that $S^1$-bundles over
$S^2$ are the lens spaces which are determined by their cohomology
structures (cf. \cite{JW54}). The third case is due to Proposition
\ref{focal in S4} and a theorem of Massey \cite{Ma74} which states
that for any embedding of $\mathbb{R}P^2$ in $\Sigma^4$, the normal
bundle is unique and independent of the embedding\footnote{Although
Massey only proved it for $S^4$, it turns out that it's also true
for any homotopy 4-sphere due to his topological proof.}. In fact, this normal
bundle is isomorphic to the normal bundle of the Veronese embedding of $\mathbb{R}P^2$ in $S^4$ due to a result of Levine \cite{Le63} that: 2-plane bundles over $\mathbb{R}P^2$ are completely determined by the first Stiefel-Whitney class and the twisted Euler class which are equal to $1\in H^1(\mathbb{R}P^2,\mathbb{Z}_2)\cong\mathbb{Z}_2$ (\cite{Ma74}) and $2\in H^2(\mathbb{R}P^2,\mathcal{Z})\cong \mathbb{Z}$ (\cite{Ma69}) respectively for both normal bundles in $\Sigma^4$ and $S^4$, where we use $\mathcal{Z}$ as the symbol for twisted integer coefficients. We remark that
all the equalities are diffeomorphisms as a well known fact says
that every topological manifold of dimension one, two or three
admits a unique smooth structure (see \cite{FQ90}).
\end{proof}
To prove Theorem \ref{homotopy 4-sphere} we need only verify it respectively for these three cases in Corollary \ref{3cases}. Recall that for each case, we can write
$\Sigma^4$ as $\Sigma^4=B_1\cup_{M_{t_0}}B_{-1}$, where $B_{\pm1}$
are ball bundles over $M_{\pm1}$ respectively with the same boundary
$M_{t_0}$. Therefore, in the first case we can look $\Sigma^4$ as a
twisted 4-sphere, where a twisted n-sphere $M^n$, by definition, is a manifold
constructed by gluing two discs $D_1^n$, $D_2^n$ along their
boundaries using a diffeomorfism
$h:S^{n-1}\rightarrow S^{n-1}$ and is usually written as
$M^n=D_1^n\cup_hD_2^n$. It's well known that a twisted sphere must
be homeomorphic to the standard sphere (cf. \cite{Mil56}). Thus a celebrated theorem of Cerf \cite{Ce68} which states that every twisted 4-sphere is
diffeomorphic to $S^4$ helps us verify the first case.

As for the second case, \emph{i.e.}, when $(M_1,M_{-1})\cong(S^1,S^2)$ and $M_{t_0}\cong S^1\times S^2$, we have $B_1\cong S^1\times D^3$, $B_{-1}\cong D^2\times S^2$ and $\Sigma^4\cong (S^1\times D^3)\cup_h(D^2\times S^2)$, where $h: S^1\times S^2\rightarrow S^1\times S^2$ is the diffeomorphism induced from the diffeomorphisms between the common boundary of $B_{\pm1}$ and the boundaries of $S^1\times D^3$ and $D^2\times S^2$. On the other hand, we have the standard decomposition of $S^4$ by certain isoparametric hypersurface with $2$ principal curvatures, \emph{i.e.}, $S^4= (S^1\times D^3)\cup_{id}(D^2\times S^2)$, where $id: S^1\times S^2\rightarrow S^1\times S^2$ is the identity map. Fortunately again Theorem 17.1 and its corollary in \cite{Gl62} help us verify this case that $\Sigma^4\cong S^4$. In fact, firstly one can extend $h$ to a diffeomorphism, say $H: S^1\times D^3\rightarrow S^1\times D^3$, such that $H|_{\partial(S^1\times D^3)}=h$. Then he can construct a diffeomorphism $\Phi: S^4\rightarrow\Sigma^4$ as follows: $\Phi|_{S^1\times D^3}=H: S^1\times D^3\rightarrow S^1\times D^3$, $\Phi|_{D^2\times S^2}=id: D^2\times S^2\rightarrow D^2\times S^2$.
\begin{rem}
Gluck \cite{Gl62} also proved that $\mathcal{H}(S^1\times S^2)=\mathbb{Z}_2\oplus\mathbb{Z}_2\oplus\mathbb{Z}_2$, where $\mathcal{H}(S^1\times S^2)$ denotes the group of diffeomorphism types of $S^1\times S^2$. The corresponding generators are
$$\begin{array}{l}
          S^1\times S^2\rightarrow S^1\times S^2,\\
          (z,\quad w)\mapsto (\bar{z},\quad w)
        \end{array}
        \begin{array}{l}
          S^1\times S^2\rightarrow S^1\times S^2,\\
          (z,\quad w)\mapsto (z,~~ -w)
        \end{array}
        \begin{array}{l}
          S^1\times S^2\rightarrow S^1\times S^2,\\
          (z,\quad w)\mapsto (z,~~ \phi(z)w)
        \end{array}
                $$
where $\phi(z)w=(z\cdot(w_1,w_2),w_3)$ is a rotation of $w=(w_1,w_2,w_3)\in S^2$ around the $w_3$-axis through the angle of $z\in S^1$. It's easily seen that each generator can be extended to a diffeomorphism of $S^1\times D^3$. Thus every diffeomorphism of $S^1\times S^2$ extends to a diffeomorphism of $S^1\times D^3$.
\end{rem}
It remains to prove the third case, \emph{i.e.}, the case when $(M_1,M_{-1})\cong(\mathbb{R}P^2,\mathbb{R}P^2)$, $M_{t_0}\cong SO(3)/\mathbb{Z}_2\oplus\mathbb{Z}_2$. Note that $B_{\pm1}$ are just the (unique) normal disk bundle of the embeddings of $\mathbb{R}P^2$ in $\Sigma^4$ and thus $\Sigma^4\cong B_1\cup_hB_{-1}$, where $h: \partial B_{-1}\cong SO(3)/\mathbb{Z}_2\oplus\mathbb{Z}_2\rightarrow \partial B_1\cong SO(3)/\mathbb{Z}_2\oplus\mathbb{Z}_2$ is the induced diffeomorphism. As argued in the proof of Corollary \ref{3cases}, $B_{\pm1}$ are diffeomorphic to the normal disk bundle of the Veronese embedding of $\mathbb{R}P^2$ in $S^4$. Finally, Price \cite{Pr77} showed that such manifold as $\Sigma^4$ glued by two copies of the normal disk bundle through a diffeomorphism of $SO(3)/\mathbb{Z}_2\oplus\mathbb{Z}_2$ is diffeomorphic to $S^4$ or has $\mathbb{Z}_2$ as its fundamental group, which thus helps us verify the third case and finish the proof of Theorem \ref{homotopy 4-sphere}.
\begin{rem}
In fact, $\Sigma^4$ in the second and the third case is just the
$4$-manifold constructed through some so-called \emph{Gluck surgery}
and \emph{Price surgery} respectively, where in brief the Gluck
surgery (resp. Price surgery) in $S^4$ is a method to construct a
new 4-manifold from a given embedding of $S^2$ (resp.
$\mathbb{R}P^2$) in $S^4$ by regluing $N(S^2)$ (resp.
$N(\mathbb{R}P^2)$) back to $S^4-N(S^2)$ (resp.
$S^4-N(\mathbb{R}P^2)$) through a self-diffeomorphism of
$\partial(N(S^2))\cong S^1\times S^2$ (resp.
$\partial(N(\mathbb{R}P^2))\cong
SO(3)/\mathbb{Z}_2\oplus\mathbb{Z}_2$), where $N(S^2)$ (resp.
$N(\mathbb{R}P^2)$) is a tubular neighborhood of the embedding of
$S^2$ (resp. $\mathbb{R}P^2$) that is diffeomorphic to the normal
disk bundle (cf. \cite{KSTY99}). The condition of admitting a
properly transnormal function provides $\Sigma^4$ with a further
restriction during its construction through these surgeries, that is
the exterior of the embedding (\emph{i.e.}, $S^4-N(S^2)$ or
respectively $S^4-N(\mathbb{R}P^2)$) is also a tubular neighborhood
of an embedding of $S^2$ (resp. $\mathbb{R}P^2$), and thus yields no
new 4-manifolds other than $S^4$.
\end{rem}
We conclude this section with a worthy note that our proof of
Theorem \ref{homotopy 4-sphere} above is rather dispersed and
depends on many known results, a natural question left is to give a
direct proof of the existence of a diffeomorphism between $\Sigma^4$
and $S^4$ via the given transnormal function on $\Sigma^4$. A more
tempting problem is to find a properly transnormal function on every
homotopy 4-sphere under some metric, which together with Theorem
\ref{homotopy 4-sphere} would give an affirmative answer to the
smooth Poincar\'{e} conjecture in dimension four.
\section{Existence on exotic 7-spheres}\label{section-7sphere}
We have shown in last section that exotic 4-spheres (if exist) admit
no properly transnormal function. Recall that in Section 2 we have
examples of isoparametric functions on exotic Brieskorn spheres as
an application of Proposition \ref{cohom-one-isop}. In this section,
we concentrate on existence problem of isoparametric functions on
exotic 7-spheres, especially on the Gromoll-Meyer sphere.

It is well known that ten of the $14$ exotic 7-spheres (ignoring
orientation) can be exhibited as $S^3$-bundles over $S^4$, the
so-called Milnor spheres. Thus by modifying metrics on the Milnor
spheres such that they are the total spaces of Riemannian submersions with totally
geodesic fibres over the standard $S^4$ if needed (which is always
possible, see Remark \ref{iterative lift}), as Example \ref{isop in isop} in Section 2, applying Proposition \ref{isop-lift} we can get
\begin{prop}
There exist properly isoparametric functions on each Milnor sphere
with the corresponding isoparametric hypersurfaces and focal
submanifolds diffeomorphic to those in $S^7$ gotten from the inverse
of Hopf fibration $\pi:S^7\rightarrow S^4$.
\end{prop}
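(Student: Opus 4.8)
The plan is to realize each Milnor sphere as the total space of a Riemannian submersion over the round $S^4$ and then lift isoparametric functions from $S^4$ by Proposition \ref{isop-lift}, exactly in the spirit of Example \ref{isop in isop}.

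First I would recall that each of the ten Milnor spheres $M^7$ carries the structure of an $S^3$-bundle $\pi\colon M^7\to S^4$. Equipping $S^4$ with its standard round metric and applying the construction of Remark \ref{iterative lift} --- the Kaluza--Klein metric on the associated principal bundle followed by horizontal projection --- one obtains on $M^7$ a metric for which $\pi$ is a Riemannian submersion with totally geodesic, hence minimal, fibers. Now choose any isoparametric function $f$ on the round $S^4$; by Proposition \ref{focal in S4} these come in three types, all of them proper. Proposition \ref{isop-lift} then yields at once that $F:=f\circ\pi$ is a properly isoparametric function on $M^7$, whose focal submanifolds are $\pi^{-1}(M_\pm)$ and whose isoparametric hypersurfaces are $\pi^{-1}(M_t)$, where $M_\pm$ and $M_t$ denote the focal varieties and regular level hypersurfaces of $f$.

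Next I would identify these preimages up to diffeomorphism, independently of the particular bundle. Each level set $L$ of $f$ --- both focal varieties as well as every regular hypersurface --- has dimension at most $3$, while $S^4$ is $3$-connected, so the inclusion $L\hookrightarrow S^4$ is null-homotopic. Hence the restricted $S^3$-bundle $\pi^{-1}(L)\to L$ is trivial, $\pi^{-1}(L)\cong L\times S^3$, and this conclusion does not depend on which $S^3$-bundle over $S^4$ one starts from. Running the identical argument for the Hopf fibration $\pi\colon S^7\to S^4$, which (after rescaling the base) is itself a Riemannian submersion with totally geodesic fibers over the round $S^4$, shows that the focal submanifolds and isoparametric hypersurfaces of $F$ on $M^7$ are diffeomorphic to those obtained by lifting the same $f$ along $\pi\colon S^7\to S^4$. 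For instance, taking $f$ to be the isoparametric function of $S^4$ with a single pair of antipodal focal points gives, on every Milnor sphere, focal submanifolds $S^3$ and $S^3$ and isoparametric hypersurfaces $S^3\times S^3$, precisely as in $S^7$.

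I expect no serious obstacle here: the statement is essentially a formal consequence of Proposition \ref{isop-lift}. The one point that will need care is the metric modification of Remark \ref{iterative lift} --- one must verify that the induced metric genuinely makes $\pi$ a Riemannian submersion over the standard $S^4$ whose fibers are totally geodesic (so in particular minimal), which is exactly the hypothesis Proposition \ref{isop-lift} requires. That properness is preserved under the lift is already contained in Proposition \ref{isop-lift}, so nothing further is needed on that account.
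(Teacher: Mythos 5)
Your proposal is correct and follows essentially the same route as the paper: realize each Milnor sphere as an $S^3$-bundle over $S^4$, use Remark \ref{iterative lift} to make the projection a Riemannian submersion with totally geodesic fibres over the round $S^4$, lift the isoparametric functions of Proposition \ref{focal in S4} via Proposition \ref{isop-lift}, and observe that every $S^3$-bundle over $S^4$ trivializes over the (at most $3$-dimensional, hence null-homotopic) level sets, so the resulting hypersurfaces and focal submanifolds are the same products $L\times S^3$ as for the Hopf fibration. This matches the paper's proof and its accompanying remark.
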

\begin{rem}
Note that isoparametric hypersurfaces and focal submanifolds in
$S^4$ are completely listed in Proposition \ref{focal in S4}, and
every sphere bundle over $S^4$ is trivial when restricted on them.
Thus the isoparametric hypersurfaces and focal submanifolds in the
Milnor spheres or $S^7$ coming from inverse of these ones are all
diffeomorphic to the product of $S^3$ with the initial ones.
\end{rem}
\begin{rem}
One may be interested in those Riemannian metrics on the Milnor
spheres that admit properly isoparametric functions and make the
fibrations over $S^4$ be Riemannian submersions with totally
geodesic fibres. An intriguing class of such metrics on a Milnor
sphere are those induced from some invariant metrics on the
cohomogeneity one manifold, the total space of associated principal
bundle, and were proved to have non-negative sectional curvature in
\cite{GZ00}.
\end{rem}
Comparing with the cases occurring on $S^7$, now we put our
attention on constructing a properly isoparametric function on the
Milnor spheres with two points as the focal varieties. Firstly we
focus on the Gromoll-Meyer sphere $\Sigma^7$ which is the unique
exotic 7-sphere as a biquotient and thus we expect to construct such
example by projecting an $S^3$-invariant isoparametric function on
$Sp(2)$ onto $\Sigma^7$. It turns out that this function on
$\Sigma^7$ is not isoparametric though still properly transnormal
under the induced metric from some left invariant metric of $Sp(2)$,
and the level hypersurfaces have non-constant mean curvature, which
differs from the case occurring on $S^7$ (see Theorem
\ref{trans-isop-sphere}). But we don't know whether this function
would be isoparametric under a suitable metric, or would such
isoparametric functions exist. Further, we suspect the existence of
such isoparametric functions with two points as the focal varieties
on any exotic sphere. More generally, it's very interesting to pose
the following:
\begin{prob}\label{problem}
Does there always exist a properly isoparametric function on an
exotic sphere $\Sigma^n$ ($n>4$) with the focal varieties being
those occurring on $S^n$?
\end{prob}
Now we start our construction of a transnormal function as follows.

Let $Sp(2)=\{Q=\left(\begin{array}{cc}
          a& b\\
          c& d
        \end{array}\right)\in M(2,\mathbb{H})|~QQ^*=I \}$ be equipped with a left invariant metric $\langle~,~\rangle$ such that at $T_ISp(2)=\mathfrak{sp}(2)=\{\xi=\left(\begin{array}{cc}
          x& y\\
          -\bar{y}& z
        \end{array}\right)\in M(2,\mathbb{H})|~Re(x)=Re(z)=0 \}$,
        \begin{equation}\label{left inv. metric on Sp2}
        \left|\left(\begin{array}{cc}
          x& y\\
          -\bar{y}& z
        \end{array}\right)\right|^2=|x|^2+|y|^2+|z|^2.\end{equation}
Let $\xi_1,\xi_2,\xi_3$ be three left invariant vector fields on $Sp(2)$ such that at $I$, $\xi_i|_I=\left(\begin{array}{cc}
          x_i& y_i\\
          -\bar{y_i}& z_i
        \end{array}\right)$, $i=1,2,3$. Then calculating the Levi-Civita connection from the equality
        $$2\langle\nabla_{\xi_1}\xi_2,~\xi_3\rangle=\xi_1\langle\xi_2,\xi_3\rangle+\xi_2\langle\xi_1,\xi_3\rangle-\xi_3\langle\xi_1,\xi_2\rangle
        -\langle\xi_1, [\xi_2,\xi_3]\rangle+\langle\xi_2, [\xi_3,\xi_1]\rangle+\langle\xi_3, [\xi_1,\xi_2]\rangle,$$
        we establish
\begin{lem}\label{connection in Sp2}
$$\nabla_{\xi_1}\xi_2=\frac{1}{2}[\xi_1,\xi_2]+D(\xi_1,\xi_2),$$ where $$D(\xi_1,\xi_2)|_I=\frac{1}{2}\left(\begin{array}{cc}
          0& y_1z_2+y_2z_1-x_1y_2-x_2y_1\\
          -\overline{y_1z_2+y_2z_1-x_1y_2-x_2y_1}& 0
        \end{array}\right).$$
\end{lem}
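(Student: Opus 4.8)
The plan is to compute both sides of the Koszul formula
$$2\langle\nabla_{\xi_1}\xi_2,~\xi_3\rangle=\xi_1\langle\xi_2,\xi_3\rangle+\xi_2\langle\xi_1,\xi_3\rangle-\xi_3\langle\xi_1,\xi_2\rangle-\langle\xi_1, [\xi_2,\xi_3]\rangle+\langle\xi_2, [\xi_3,\xi_1]\rangle+\langle\xi_3, [\xi_1,\xi_2]\rangle$$
when $\xi_1,\xi_2,\xi_3$ are left invariant and the metric is left invariant. The first three terms on the right vanish identically, since $\langle\xi_i,\xi_j\rangle$ is a constant function on $Sp(2)$; so only the three bracket terms survive and it suffices to evaluate everything at the identity $I$. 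Writing $\langle\cdot,\cdot\rangle_I$ for the inner product \eqref{left inv. metric on Sp2} and $B(\cdot,\cdot)$ for its associated bilinear form on $\mathfrak{sp}(2)$, the formula reduces to
$$2\,B(\nabla_{\xi_1}\xi_2,\xi_3)=-B(\xi_1,[\xi_2,\xi_3])+B(\xi_2,[\xi_3,\xi_1])+B(\xi_3,[\xi_1,\xi_2]),$$
which is purely linear-algebraic. The point of the lemma is to split $\nabla_{\xi_1}\xi_2$ as $\tfrac12[\xi_1,\xi_2]$ plus a correction $D(\xi_1,\xi_2)$ measuring the failure of the metric to be bi-invariant, and to identify $D$ explicitly.

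First I would record that $2\,B([\xi_1,\xi_2],\xi_3)$ equals $B([\xi_1,\xi_2],\xi_3)-B([\xi_2,\xi_3],\xi_1)+B([\xi_3,\xi_1],\xi_2) \;+\; \big(B([\xi_1,\xi_2],\xi_3)+B([\xi_2,\xi_3],\xi_1)-B([\xi_3,\xi_1],\xi_2)\big)$; the first group is exactly the right-hand side of the Koszul formula above (up to sign rearrangement), so defining $D(\xi_1,\xi_2)$ by $2\,B(D(\xi_1,\xi_2),\xi_3):=B([\xi_1,\xi_2],\xi_3)+B([\xi_2,\xi_3],\xi_1)-B([\xi_3,\xi_1],\xi_2)$ gives $\nabla_{\xi_1}\xi_2=\tfrac12[\xi_1,\xi_2]+D(\xi_1,\xi_2)$, with $D$ symmetric in its two arguments (the second and third terms get swapped, with matching signs, under $\xi_1\leftrightarrow\xi_2$). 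It remains to evaluate this $D$ against an arbitrary $\xi_3$ and read off the matrix.

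The concrete computation is then as follows. For $\xi=\begin{pmatrix} x & y\\ -\bar y & z\end{pmatrix}$ and $\eta=\begin{pmatrix} x' & y'\\ -\bar{y'} & z'\end{pmatrix}$ in $\mathfrak{sp}(2)$, the bracket $[\xi,\eta]=\xi\eta-\eta\xi$ has off-diagonal $(1,2)$-entry $xy'+yz'-x'y-y'z$ (and the $(2,1)$-entry is its negative conjugate, as it must be), while the diagonal entries involve the commutators $xx'-x'x$ etc. and the terms $yy'^*-y'y^*$. Feeding the three cyclic versions of this into the defining relation for $D$ and using that $B$ pairs the diagonal $\mathfrak{sp}(1)\oplus\mathfrak{sp}(1)$ blocks with each other and the $\mathbb{H}$-off-diagonal block with itself via $\langle p,q\rangle=\mathrm{Re}(p\bar q)$, the diagonal contributions cancel out (this is where the bi-invariance of the metric on the two $Sp(1)$ factors is used) and only the $y$-entries produce a nonzero answer; collecting them yields precisely the $(1,2)$-entry $\tfrac12\big(y_1z_2+y_2z_1-x_1y_2-x_2y_1\big)$ claimed. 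I expect the main obstacle to be purely bookkeeping: keeping the quaternion multiplications in the correct (noncommutative) order and tracking the conjugations in $\langle p,q\rangle=\mathrm{Re}(p\bar q)$, so that the cross terms combine with the right signs; once the off-diagonal block is isolated, the identification of $D(\xi_1,\xi_2)|_I$ is immediate, and extending from $I$ to all of $Sp(2)$ is automatic by left invariance of both sides.
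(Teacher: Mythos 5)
Your overall strategy is the same as the paper's (whose proof is nothing more than ``calculate the Levi--Civita connection from the Koszul identity''): restrict to left invariant fields so the three derivative terms vanish, split $\nabla_{\xi_1}\xi_2$ as $\tfrac12[\xi_1,\xi_2]$ plus a symmetric correction, and evaluate at $I$. But your defining relation for $D$ is wrong, and not just up to a typo. Your identity $2B([\xi_1,\xi_2],\xi_3)=\{\mbox{Koszul RHS}\}+\{\mbox{your group 2}\}$ is arithmetically true, and the Koszul RHS equals $2B(\nabla_{\xi_1}\xi_2,\xi_3)$; but then what follows is $[\xi_1,\xi_2]=\nabla_{\xi_1}\xi_2+D$, i.e.\ $\nabla_{\xi_1}\xi_2=[\xi_1,\xi_2]-D$, not $\nabla_{\xi_1}\xi_2=\tfrac12[\xi_1,\xi_2]+D$. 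Moreover your group 2 is not symmetric in $\xi_1,\xi_2$: the second and third summands do interchange as you say, but the first summand $B([\xi_1,\xi_2],\xi_3)$ changes sign, so the claimed symmetry of your $D$ fails. The correct correction term is the one determined by
$$2B\bigl(D(\xi_1,\xi_2),\xi_3\bigr)=-B\bigl(\xi_1,[\xi_2,\xi_3]\bigr)+B\bigl(\xi_2,[\xi_3,\xi_1]\bigr)=B\bigl([\xi_3,\xi_1],\xi_2\bigr)+B\bigl([\xi_3,\xi_2],\xi_1\bigr),$$
namely exactly the two Koszul terms other than $B([\xi_1,\xi_2],\xi_3)$; this expression is manifestly symmetric. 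Carrying out the quaternionic computation with \emph{your} $D$ would give $\tfrac12[\xi_1,\xi_2]-D_{\mathrm{corr}}$, whose $(1,2)$-entry is $x_1y_2-y_2z_1$ and whose diagonal is half that of $[\xi_1,\xi_2]$ (generically nonzero) --- not the matrix in the lemma. So the final formula you assert cannot be obtained from the $D$ you defined; the derivation as written does not close.

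Once the defining relation is repaired, the rest of your plan goes through as described: writing $2B(D,\xi_3)=\RE(P\bar x_3)+\RE(Q\bar y_3)+\RE(R\bar z_3)$, the $x$- and $z$-coefficients collapse to $[x_1,x_2]+[x_2,x_1]$ and $[z_1,z_2]+[z_2,z_1]$ (which vanish) plus quantities of the form $y_1\bar y_2+y_2\bar y_1$ and $\bar y_1y_2+\bar y_2y_1$ (which are real and hence pair trivially with the imaginary $x_3,z_3$), while the $y$-coefficient collects to $y_1z_2+y_2z_1-x_1y_2-x_2y_1$, giving exactly the matrix in the lemma. Your heuristic for why only the off-diagonal block survives --- the metric (\ref{left inv. metric on Sp2}) agrees with the bi-invariant one on the diagonal $\mathfrak{sp}(1)\oplus\mathfrak{sp}(1)$ blocks and differs only in the weight of the $\mathbb{H}$-off-diagonal entry --- is correct; only the bookkeeping in the definition of $D$ needs to be fixed.
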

For general vector fields $\xi, \eta$, suppose $\gamma(t)$ is the integral curve of $\eta$ in $Sp(2)$ with $\gamma(0)=Q$, then we can make use of the following formula to calculate $\nabla_{\eta}\xi$:
\begin{equation}\label{generalconn in Sp2}
\nabla_{\eta}\xi|_Q=Q\frac{d}{dt}(\gamma(t)^*\xi(\gamma(t)))|_{t=0}+Q(\frac{1}{2}[Q^*\eta,Q^*\xi]+D(Q^*\eta,Q^*\xi)),
\end{equation}
where $Q^*=\bar{Q}^t=Q^{-1}$ for $Q\in Sp(2)$.

Define the function mentioned before on $Sp(2)$ by
\begin{equation}\label{isop on Sp2}
F(Q):=Re(a) \quad\emph{for}\quad Q=\left(\begin{array}{cc}
          a& b\\
          c& d
        \end{array}\right)\in Sp(2).
        \end{equation}
Later we will show that $F$ is $S^3$-invariant and thus can be projected as a function $f$ on the Gromoll-Meyer sphere $\Sigma^7\cong Sp(2)/S^3$. Now we show firstly the following
\begin{thm}\label{thm in Sp2}
The function $F$ defined by (\ref{isop on Sp2}) is a properly
isoparametric function on $Sp(2)$ under the left invariant
metric\footnote{Note that if we equip $Sp(2)$ with a biinvariant
metric, the function $F$ will be no longer isoparametric or
equifocal in the sense of \cite{TT95}, which should be compared with
Theorem 4.3 in \cite{Tang98} where some confusion happened.} defined
by (\ref{left inv. metric on Sp2}). In fact,
$$|\nabla F|^2=1-F^2, \quad \triangle F=-7F.$$
Further, the isoparametric hypersurface $M_t^9:=F^{-1}(t)\subset Sp(2)$ has three distinct principal curvatures (in general)
$$\lambda_1(Q)=\frac{t}{\sqrt{1-t^2}}, \quad \lambda_2(Q)=\frac{t+\sqrt{t^2+4|b|^2}}{2\sqrt{1-t^2}},\quad \lambda_3(Q)=\frac{t-\sqrt{t^2+4|b|^2}}{2\sqrt{1-t^2}},$$
for $Q=\left(\begin{array}{cc}
          a& b\\
          c& d
        \end{array}\right)\in M_t^9$ and each has multiplicity $3$. At last the focal submanifolds are $$M_{\pm1}=\left\{\left(\begin{array}{cc}
          \pm1& 0\\
          0& d
        \end{array}\right)|~d\in S^3\right\}\cong S^3.$$
\end{thm}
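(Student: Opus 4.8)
The plan is to recognize $F$ as the pull-back, under a Riemannian submersion, of the standard height function on $S^{7}$, so that the isoparametric part of the statement reduces to Proposition \ref{isop-lift}. Since $F(Q)=Re(a)$ depends only on the first column $(a,c)^{t}$ of $Q$, and the first column of an element of $Sp(2)$ is a unit vector of $\mathbb{H}^{2}$, we may write $F=g\circ p$, where $p:Sp(2)\rightarrow S^{7}\subset\mathbb{H}^{2}$ sends $Q$ to its first column and $g:S^{7}\rightarrow\mathbb{R}$, $g(u,v)=Re(u)$, is the restriction of a linear coordinate. On the round $S^{7}$ this $g$ is the standard properly isoparametric function: $|\nabla g|^{2}=1-g^{2}$, $\triangle g=-7g$, its regular level sets are $6$-spheres, and its focal varieties are the two poles $(\pm1,0)$, of codimension $7$.

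The key verification is that, for the left invariant metric (\ref{left inv. metric on Sp2}) on $Sp(2)$ and the round metric on $S^{7}$, the map $p$ is a Riemannian submersion with totally geodesic (in particular minimal) fibres. Representing a tangent vector at $I$ by a left invariant field $\xi$ with $\xi|_{I}=\left(\begin{smallmatrix}x&y\\-\bar{y}&z\end{smallmatrix}\right)$, one has $dp_{I}(\xi)=(x,-\bar{y})\in T_{(1,0)}S^{7}$, and (\ref{left inv. metric on Sp2}) shows $dp_{I}$ restricts to a linear isometry of the orthogonal complement of the $z$-subspace onto $T_{(1,0)}S^{7}$; since $p$ intertwines left translations of $Sp(2)$ (isometries of the left invariant metric) with the transitive isometric action of $Sp(2)$ on $S^{7}$, $p$ is a Riemannian submersion everywhere. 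Its fibre through $I$ is the subgroup $\{\mathrm{diag}(1,d)\,|\,d\in Sp(1)\}$, whose Lie algebra is the $z$-subspace; there the $D$-term of Lemma \ref{connection in Sp2} vanishes and the bracket stays inside, so $\nabla_{\xi_{1}}\xi_{2}=\tfrac{1}{2}[\xi_{1},\xi_{2}]$ and this fibre is totally geodesic, hence so is every fibre (a left translate of it). Proposition \ref{isop-lift} now yields at once that $F$ is properly isoparametric with $|\nabla F|^{2}=(1-g^{2})\circ p=1-F^{2}$ and $\triangle F=(-7g)\circ p-\langle H,\nabla F\rangle=-7F$ (the fibres' mean curvature vector $H$ being zero), that its focal varieties are $M_{\pm1}=p^{-1}(\pm1,0)=\{\mathrm{diag}(\pm1,d)\,|\,d\in S^{3}\}\cong S^{3}$ (of codimension $7\geq2$ in $Sp(2)$, confirming properness), and that $M_{t}^{9}=F^{-1}(t)=p^{-1}(g^{-1}(t))$ is a $9$-dimensional $S^{3}$-bundle over $S^{6}$. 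It is exactly the metric (\ref{left inv. metric on Sp2}), rather than a biinvariant one, that makes $p$ a Riemannian submersion, which is the point of the footnote.

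It remains to compute the principal curvatures of $M_{t}^{9}$, and here I would compute directly. From $\langle\nabla F,\xi\rangle|_{Q}=\xi F|_{Q}=Re(ax-b\bar{y})$ for $\xi$ as above, writing $\nabla F|_{Q}=Qw$ with $w\in\mathfrak{sp}(2)$ and comparing with the (left invariant) metric, one finds
$$\nabla F|_{Q}=Q\left(\begin{array}{cc} -Im(a)&-b\\ \bar{b}&0\end{array}\right),$$
which re-confirms $|\nabla F|^{2}=|Im(a)|^{2}+|b|^{2}=(|a|^{2}-F^{2})+(1-|a|^{2})=1-F^{2}$. Feeding this explicit vector field into the general connection formula (\ref{generalconn in Sp2}), together with Lemma \ref{connection in Sp2}, produces $\nabla_{\xi}\nabla F$ and hence the Hessian $H_{F}(\xi,\eta)=\langle\nabla_{\xi}\nabla F,\eta\rangle$; restricting $H_{F}$ to $T_{Q}M_{t}=(\nabla F)^{\perp}$ and dividing by $-|\nabla F|=-\sqrt{1-t^{2}}$ as in (\ref{shape-hessian}) gives the shape operator $A$ of $M_{t}$. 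The remaining work is to diagonalize $A$: tracking the quaternion products one checks that, modulo the single linear constraint defining $T_{Q}M_{t}$, the form $H_{F}|_{T_{Q}M_{t}}$ decomposes along the $x$-, $y$- and $z$-type directions of $\mathfrak{sp}(2)$ and has exactly the three eigenvalues $\lambda_{1},\lambda_{2},\lambda_{3}$ of the statement, each of multiplicity $3$. I expect this diagonalization to be the only genuine obstacle; as consistency checks one has $\lambda_{2}+\lambda_{3}=\lambda_{1}$, $\lambda_{2}\lambda_{3}=-|b|^{2}/(1-t^{2})$, and $3(\lambda_{1}+\lambda_{2}+\lambda_{3})=6t/\sqrt{1-t^{2}}$, matching the constant mean curvature $\tfrac{1}{2\sqrt{b(t)}}(b'(t)-2a(t))$ forced by Theorem \ref{proper equiv-def} with $b(t)=1-t^{2}$, $a(t)=-7t$. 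Finally, the fact that $|b|^{2}=1-|a|^{2}$ varies over $M_{t}$ while only $Re(a)=t$ is fixed is precisely what makes the principal curvatures non-constant although $F$ is isoparametric, in contrast to Theorem \ref{trans-isop-sphere}; everything else follows cleanly from the submersion picture above.
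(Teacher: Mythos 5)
Your proposal is correct, and for the first half of the theorem it takes a genuinely different route from the paper. The paper computes everything by hand inside $Sp(2)$: it solves $\langle\nabla F,\xi\rangle=\xi(F)$ to get $\nabla F|_Q=Q\left(\begin{smallmatrix}-Im(a)&-b\\ \bar{b}&0\end{smallmatrix}\right)$ (identical to your formula), reads off $|\nabla F|^2=1-F^2$, and then uses Lemma \ref{connection in Sp2} to compute the full Hessian $H_F(\xi_1,\xi_2)=-\langle Q\left(\begin{smallmatrix}Fx_1&Fy_1+bz_1\\ -(\overline{Fy_1+bz_1})&Im(\bar{b}y_1)\end{smallmatrix}\right),\xi_2\rangle$, from which $\triangle F=-7F$ and the focal set $\{F=\pm1\}$ follow. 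Your observation that $F=g\circ p$ with $p:Sp(2)\to Sp(2)/\{\mathrm{diag}(1,d)\}\cong S^7$ the first--column map, that the metric (\ref{left inv. metric on Sp2}) makes $p$ a Riemannian submersion onto the round $S^7$ with totally geodesic fibres, and that Proposition \ref{isop-lift} then delivers $|\nabla F|^2=1-F^2$, $\triangle F=-7F$, the focal submanifolds $p^{-1}(\pm1,0)\cong S^3$ and properness all at once, is a cleaner and more conceptual argument; your verification (isometry of the horizontal space at $I$, equivariance under left translation, vanishing of the $D$-term on $\mathfrak{k}$) is complete and correct, and it also explains the footnote, since a biinvariant metric rescales the $y$-block and destroys the submersion onto the round sphere. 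The one place where you fall back on the paper's method --- computing $H_F$ via (\ref{generalconn in Sp2}) and diagonalizing the shape operator from (\ref{shape-hessian}) --- is left as a sketch: you assert the eigenvalues and multiplicities without performing the diagonalization. Your setup and stated answer are right (the consistency checks $\lambda_2+\lambda_3=\lambda_1$, $\lambda_2\lambda_3=-|b|^2/(1-t^2)$ and the trace matching $6t/\sqrt{1-t^2}$ all hold), and the paper's actual computation confirms your structural guess: the eigenvalue equation decouples, the $x$-block forces $\lambda_1=t/\sqrt{1-t^2}$ on a $3$-dimensional space $\{(x,\tfrac{Re(ax)}{|b|^2}b,0)\}$, and the coupled $(y,z)$-system reduces to the quadratic $\mu^2-t\mu-|b|^2=0$ with $\mu=\lambda\sqrt{1-t^2}$, giving $\lambda_{2,3}$ each on a $3$-dimensional eigenspace $\{(0,y,\bar{b}y/\mu):Re(\bar{b}y)=0\}$. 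To make the proposal a full proof you would need to write out this short linear-algebra step, but there is no gap in the ideas.
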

\begin{rem}
The isoparametric hypersurface $M_0^9=F^{-1}(0)\cong S^6\times S^3$ has three distinct principal curvatures $0,~ |b|, ~-|b|$ with the same muliplicity $3$ and thus it's an austere (minimal) hypersurface in $(Sp(2),~\langle~,~\rangle)$ in the sense of Harvey and Lawson \cite{HL82}.
\end{rem}
\begin{proof}
By solving the equation $\langle\nabla F, \xi\rangle=\xi (F)$ for any vector $\xi$, we find
        $$\nabla F|_Q=-Q\left(\begin{array}{cc}
          Im(a)& b\\
          -\bar{b}& 0
        \end{array}\right), \quad \emph{for} \quad Q=\left(\begin{array}{cc}
          a& b\\
          c& d
        \end{array}\right)\in Sp(2),$$
where $Im(a)=a-Re(a)$ is the imaginary part of $a\in\mathbb{H}$. Therefore, $|\nabla F|^2=1-F^2$.

For $\xi_i=Q\left(\begin{array}{cc}
          x_i& y_i\\
          -\bar{y_i}& z_i
        \end{array}\right)\in T_QSp(2)$, $i=1,2$, extending them to left invariant vector fields $\xi_1,\xi_2$, we are ready to calculate the Hessian of $F$ by Lemma \ref{connection in Sp2} as:
\begin{eqnarray}\label{hessian in Sp2}
H_F(\xi_1,\xi_2)&:=&\langle\nabla_{\xi_1}(\nabla F),~ \xi_2\rangle=\xi_1\langle\nabla F,~\xi_2\rangle-\langle\nabla F, ~ \nabla_{\xi_1}\xi_2\rangle\nonumber\\&=&\xi_1\xi_2F-\frac{1}{2}[\xi_1,\xi_2]F-D(\xi_1,\xi_2)F\nonumber\\&=&-\langle Q\left(\begin{array}{cc}
          Fx_1& Fy_1+bz_1\\
          -(\overline{Fy_1+bz_1})& Im(\bar{b}y_1)
        \end{array}\right), ~\xi_2\rangle,
\end{eqnarray}
and thus $\triangle F=Tr(H_F)=-7F$. Meanwhile, it's easily seen that $\nu:=\nabla F/|\nabla F|$ is the unit normal vector field of any level hypersurface $M_t^9=F^{-1}(t)$. Then by formula (\ref{shape-hessian}), we derive the expression of the shape operator $A_{\nu}$ of $M_t^9$:
$$
A_{\nu}(\xi)=\frac{Q}{|\nabla F|}\left(\begin{array}{cc}
          tx& ty+bz\\
          -(\overline{ty+bz})& Im(\bar{b}y)
        \end{array}\right),\quad for \quad \xi=Q\left(\begin{array}{cc}
          x& y\\
          -\bar{y}& z
        \end{array}\right)\in T_QM_t^9.
$$
Now suppose $\lambda$ is an eigenvalue of $A_{\nu}$ at $Q\in M_t^9$, then there exists a nonzero vector $\xi=Q\left(\begin{array}{cc}
          x& y\\
          -\bar{y}& z
        \end{array}\right)\in T_QM_t^9$ such that $A_{\nu}(\xi)=\lambda\xi$, \emph{i.e.},
        $$\frac{1}{|\nabla F|}\left(\begin{array}{cc}
          tx& ty+bz\\
          -(\overline{ty+bz})& Im(\bar{b}y)
        \end{array}\right)=\lambda\left(\begin{array}{cc}
          x& y\\
          -\bar{y}& z
        \end{array}\right).$$
Therefore, when $b=0$ and $t=0$, we have $\lambda\equiv0$ with its eigenspace being $T_QM_t^9$; when $b\neq0$, solving the above equation, we have three solutions $\lambda_1,\lambda_2,\lambda_3$ as specified in the theorem with their eigenspaces being
 \begin{eqnarray}
 T_1&=&\left\{\left(\begin{array}{cc}
          x& \frac{Re(ax)}{|b|^2}b\\
          -\frac{Re(ax)}{|b|^2}\bar{b}& 0
        \end{array}\right)|~ Re(x)=0\right\},\nonumber\\
T_2&=&\left\{\left(\begin{array}{cc}
          0& y\\
          -\bar{y}& \frac{\bar{b}y}{\lambda_2|\nabla F|}
        \end{array}\right)| ~Re(\bar{b}y)=0\right\},\nonumber\\
T_3&=&\left\{\left(\begin{array}{cc}
          0& y\\
          -\bar{y}& \frac{\bar{b}y}{\lambda_3|\nabla F|}
        \end{array}\right)| ~Re(\bar{b}y)=0\right\},\nonumber
\end{eqnarray}
each of which has dimension $3$. Finally, $|\nabla F|=0$ if and only if $F=\pm1$, which completes the proof.
\end{proof}

We are now in a position to give a precise definition of the Gromoll-Meyer sphere. Recall that the Gromoll-Meyer sphere $\Sigma^7$ was defined in \cite{GM74} as a quotient $Sp(2)/S^3$, where for $q\in S^3\subset\mathbb{H}$, \begin{equation}\label{GM-action}
\left(\begin{array}{cc}
          a& b\\
          c& d
        \end{array}\right)\thicksim\left(\begin{array}{cc}
          q& 0\\
          0& 1
        \end{array}\right)\left(\begin{array}{cc}
          a& b\\
          c& d
        \end{array}\right)\left(\begin{array}{cc}
          \bar{q}& 0\\
          0& \bar{q}
        \end{array}\right).\end{equation}
Obviously, $Re(a)=Re(qa\bar{q})$ for any $q\in S^3$, which yields that the function $F$ on $Sp(2)$ defined by (\ref{isop on Sp2}) is $S^3$-invariant and thus can be projected as a function, say $f$, on $\Sigma^7\cong Sp(2)/S^3$. On the other hand, it's easily seen that the metric $\langle~,~\rangle$ on $Sp(2)$ defined by (\ref{left inv. metric on Sp2}) is invariant under the $S^3$-action in (\ref{GM-action}) and thus it induces a metric on $\Sigma^7\cong Sp(2)/S^3$ (still denoted by $\langle~,~\rangle$) making the submersion into a Riemannian submersion.
\begin{thm}\label{isop-on-GM sphere}
The function $f$ on the Gromoll-Meyer sphere $\Sigma^7\cong
Sp(2)/S^3$ under the induced metric is a properly transnormal but
not an isoparametric function with two points as the focal
varieties, and the regular level hypersurfaces of $f$ have
non-constant mean curvature.
\end{thm}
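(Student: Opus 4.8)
The plan is to push the explicit computations from Theorem \ref{thm in Sp2} down through the Riemannian submersion $\pi: Sp(2)\rightarrow \Sigma^7$ and combine them with the general formulas of Proposition \ref{isop-lift}. First I would record that $F$ is $S^3$-invariant (already observed: $\RE(a)=\RE(qa\bar q)$), so it descends to a smooth $f$ on $\Sigma^7$ with $F=f\circ\pi$. Since $\pi$ is a Riemannian submersion, $|\nabla f|^2\circ\pi=|\nabla F|^2_{\mathcal H}$, the squared norm of the horizontal part of $\nabla F$; but $\nabla F$ is automatically horizontal because $F$ is constant on fibres, so $|\nabla f|^2\circ\pi=|\nabla F|^2=1-F^2=1-(f\circ\pi)^2$, giving $|\nabla f|^2=1-f^2$ on $\Sigma^7$. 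Hence $f$ is transnormal. The focal varieties of $f$ are $\pi(M_{\pm1})$; since $M_{\pm1}=\{\mathrm{diag}(\pm1,d):d\in S^3\}\cong S^3$ and the $S^3$-action in (\ref{GM-action}) restricted to $M_{\pm1}$ acts as $d\mapsto qd\bar q$, which is transitive on $S^3$, each image $\pi(M_{\pm1})$ is a single point. Thus $f$ has two points as focal varieties, the codimension condition holds trivially, and $f$ is \emph{properly} transnormal. (Alternatively, properness follows from Proposition \ref{simplyconn-prop} once one knows $\Sigma^7$ is simply connected and some regular level set is connected.)

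Next I would compute the mean curvature of the level hypersurfaces $N_t:=f^{-1}(t)=\pi(M_t^9)$ and show it is non-constant. The cleanest route is the second formula in Proposition \ref{isop-lift}: for $F=f\circ\pi$ we have $\widetilde\triangle F=(\triangle f)\circ\pi-\langle H,\widetilde\nabla F\rangle$, where $\widetilde\triangle,\widetilde\nabla$ are the operators on $Sp(2)$ and $H$ is the mean curvature vector field of the $S^3$-fibres. Since $\widetilde\triangle F=-7F$ (Theorem \ref{thm in Sp2}) and $\widetilde\nabla F=\nabla F$ is horizontal, this reads $(\triangle f)\circ\pi=-7F+\langle H,\nabla F\rangle$. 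So it suffices to show $\langle H,\nabla F\rangle$ is \emph{not} a function of $F$ alone — then $\triangle f$ cannot equal $a(f)$ for any $a$, so $f$ is not isoparametric, and by the formula $h(t)=\frac1{2\sqrt{b(t)}}(b'(t)-2a(t))$-type identity (more precisely by (\ref{shape-hessian})), the regular level hypersurfaces $N_t$ have non-constant mean curvature. The key computational input is an explicit expression for $H$: the $S^3$-fibre through $Q$ is the orbit of $q\mapsto \mathrm{diag}(q,1)\,Q\,\mathrm{diag}(\bar q,\bar q)$, whose tangent space at $Q$ is spanned by the values at $Q$ of the Killing fields $X_p:=\tfrac{d}{ds}\big|_0\mathrm{diag}(e^{sp},1)Q\,\mathrm{diag}(e^{-sp},e^{-sp})$ for $p\in\IM\mathbb H$; using Lemma \ref{connection in Sp2} and formula (\ref{generalconn in Sp2}) one computes $\nabla_{X_p}X_p$ at $Q$, sums over an orthonormal basis of $\IM\mathbb H$, projects onto the horizontal space, and reads off $H$. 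I expect $\langle H,\nabla F\rangle$ to come out as an expression involving $|b|^2$ (and possibly $\RE(a)$) rather than $\RE(a)$ alone — consistent with the principal curvatures in Theorem \ref{thm in Sp2} depending on $|b|$ — which is exactly the obstruction to $f$ being isoparametric.

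Finally I would verify directly that $H$ is genuinely nonzero along a fibre and that $\langle H,\nabla F\rangle$ actually varies on a fixed level set $N_t$ (rather than accidentally being constant there): pick two points of $Sp(2)$ with the same $\RE(a)=t$ but different $|b|$ — say one with $b=0$ and one with $b\neq 0$, both of which occur in $M_t^9$ for $0<t<1$ — and compare the values of $-7t+\langle H,\nabla F\rangle$ at their images. Since the two values differ, $\triangle f$ takes at least two values on $N_t$, so $f$ is not isoparametric and $N_t$ does not have constant mean curvature, completing the proof. The main obstacle is the bookkeeping in computing $H$: one must carefully track the non-bi-invariance of the metric (\ref{left inv. metric on Sp2}) through formula (\ref{generalconn in Sp2}), handle the fact that the $S^3$-action is a two-sided (biquotient) action rather than a one-sided one, and correctly identify the horizontal projection; but no conceptual difficulty beyond the quaternionic algebra already exercised in Theorem \ref{thm in Sp2} is involved. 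A slicker alternative, avoiding $H$ entirely, is to compute $\triangle f$ on $\Sigma^7$ intrinsically by choosing at each point a horizontal orthonormal frame for $Sp(2)$, noting $\widetilde\triangle F=\sum H_F(e_i,e_i)$ splits as the horizontal part $\sum_{e_i\ \mathrm{horiz}}H_F(e_i,e_i)$ plus the vertical part, where the horizontal part equals $(\triangle f)\circ\pi$ up to the O'Neill-type correction; reading off the vertical part of $H_F$ from the explicit Hessian (\ref{hessian in Sp2}) — which plainly involves $b$ through the term $\IM(\bar b y)$ and $Fy_1+bz_1$ — again exhibits the $|b|$-dependence and the same conclusion follows.
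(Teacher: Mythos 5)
Your proposal is essentially the paper's own proof: descend $F$ to $f$ through the Riemannian submersion to get $|\nabla f|^2=1-f^2$, use $\triangle F=\triangle f\circ\pi-\langle H,\nabla F\rangle$ to reduce everything to the fibre term $\Phi=\langle H,\nabla F\rangle$, and exhibit non-constancy of $\Phi$ on a fixed level set (the paper does this on $F^{-1}(0)$, normalizing $a,b$ by the $S^3$-action and obtaining $\phi([Q])=\frac{8a_1b_1b_0}{EF}(E-8a_2^2b_1^2)$). Two cautions on the details you sketch. First, the $S^3$-action restricted to $M_{\pm1}=\{\mathrm{diag}(\pm1,d)\}$ is $d\mapsto d\bar q$, not $d\mapsto qd\bar q$: conjugation preserves $\RE(d)$ and is \emph{not} transitive on $S^3$, so your stated justification that $\pi(M_{\pm1})$ is a point fails as written, although the corrected action is transitive and the conclusion stands. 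Second, the Killing fields $\nu(i),\nu(j),\nu(k)$ are not orthonormal along a fibre, so $H$ must be computed as $\sum g^{\alpha\beta}(\nabla_{\nu_\alpha}\nu_\beta)^{\perp}$ with the inverse Gram matrix; simply summing $\nabla_{X_p}X_p$ over an orthonormal basis of $\IM\mathbb{H}$ gives the wrong vector, and the paper's computation of $(g_{\alpha\beta})$ exists precisely to handle this. Finally, $\Phi$ turns out not to be a function of $|b|$ alone and vanishes at many points with $b\neq0$ (e.g. whenever $b_1=0$ in the normalized form), so the two comparison points on a level set must be chosen slightly more carefully than ``$b=0$ versus $b\neq0$''; generic points work and the overall strategy is exactly the paper's.
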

\begin{rem}
In fact, from the proof of Theorem 1 in \cite{GM74}, one can see that this function $f$ is just the Morse function defined by Milnor (\cite{Mil56}, page 404) on the Gromoll-Meyer sphere. Coincidently, the integral curves of $\nabla f/|\nabla f|$ is just the geodesics whose explicit formulas were given by Dur\'{a}n \cite{Du01}.
\end{rem}
\begin{proof}
Though the proof is now direct, the computations are complicated due to the intricacy of the $S^3$-action in (\ref{GM-action}). By definition, $F=f\circ \pi$, where $\pi:Sp(2)\rightarrow\Sigma^7\cong Sp(2)/S^3$ is the projection. Then $\nabla f=d\pi(\nabla F)$ and thus $|\nabla f|^2=1-f^2$. So $f$ is transnormal and the focal varieties are $$M_{\pm1}=f^{-1}(\pm1)=\left\{ \left[ \left(\begin{array}{cc}
          \pm1& 0\\
          0& 1
        \end{array}\right)\right]\right\},$$
where $[\cdot]$ denotes the $S^3$-orbit of a matrix in $Sp(2)$ that seen as a point in $\Sigma^7$. Now it suffices to calculate $\triangle f$ which is rather affected by the intricacy of the $S^3$-action.

Let $H$ denote the mean curvature vector field of an $S^3$-orbit in $Sp(2)$ (\emph{i.e.}, the fibre $\pi^{-1}([Q])$ for some $[Q]\in \Sigma^7$). Then it's easily seen that $\triangle F=\triangle f\circ\pi-\langle H, \nabla F\rangle$. Of course, $\Phi:=\langle H, \nabla F\rangle$ is $S^3$-invariant and thus can be projected as a function, say $\phi$, on $\Sigma^7$. Then since $\triangle F=-7F$ by Theorem \ref{thm in Sp2}, $$\triangle f=-7f+\phi.$$ Therefore we need only to calculate $\phi$ for any $[Q]\in \Sigma^7$. In particular, we will calculate $\phi$ for $[Q]\in f^{-1}(0)\subset \Sigma^7$ and find that it's a non-constant function on $f^{-1}(0)$, which says that $\phi$ (and hence $\triangle f$) is not a function of $f$ and completes the proof.

Observe that the tangent space of an $S^3$-orbit $\pi^{-1}([Q_0])$ at $Q=\left(\begin{array}{cc}
          a& b\\
          c& d
        \end{array}\right)\sim Q_0$ is $$\left\{\nu(x):=Q\left(\begin{array}{cc}
          \bar{a}xa-x& \bar{a}xb\\
          \bar{b}xa& \bar{b}xb-x
        \end{array}\right)|~Re(x)=0\right\}.$$
Then $\{\nu_1:=\nu(i), \nu_2:=\nu(j), \nu_3:=\nu(k)\}$ constitutes a global frame of $\pi^{-1}([Q_0])$. Let $g_{\alpha\beta}:=\langle \nu_{\alpha},\nu_{\beta}\rangle$, for $\alpha, \beta=1,2,3$, and let $(g^{\alpha\beta}):=(g_{\alpha\beta})^{-1}$ be the inverse matrix. Although the canonical frame $\{\nu_1, \nu_2, \nu_3\}$ is not left invariant and thus their covariant derivatives should be calculated by formula (\ref{generalconn in Sp2}), since $\langle\nu_{\alpha},\nabla F\rangle=\nu_{\alpha}(F)=0$, we have
\begin{equation}\label{Phi}
\Phi=\sum g^{\alpha\beta}\langle\nabla_{\nu_{\alpha}}\nu_{\beta},\nabla F\rangle
=-\sum g^{\alpha\beta}H_F(\nu_{\alpha},\nu_{\beta}).
\end{equation}
We'll use the $S^3$-invariance of $\Phi$ to simplify the computations, that is to choose some ``good" point $Q$ in each orbit such that the matrix $(g_{\alpha\beta})$ would get simple. We need the following:
\begin{lem}
For any $a,b\in\mathbb{H}$, there exists some $q\in S^3$ such that $$qb\bar{q}=b_0+b_1i, \quad qa\bar{q}=a_0+a_1i+a_2j, \quad for\quad b_0,b_1, a_0,a_1,a_2\in\mathbb{R}.$$
\end{lem}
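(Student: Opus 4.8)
The plan is to reduce the statement to an elementary fact about the action of $SO(3)$ on $\mathbb{R}^3$, via the standard double cover of $SO(3)$ by $S^3$. Recall that for $q\in S^3\subset\mathbb{H}$ the conjugation $\rho(q)\colon \vec x\mapsto q\vec x\bar q$ restricts to a rotation of the space of imaginary quaternions $\mathrm{Im}\,\mathbb{H}=\mathbb{R}i\oplus\mathbb{R}j\oplus\mathbb{R}k\cong\mathbb{R}^3$, that $\rho\colon S^3\to SO(3)$ is a surjective group homomorphism, and that conjugation fixes the real part, i.e. $q(r+\vec x)\bar q=r+\rho(q)(\vec x)$ for $r\in\mathbb{R}$ and $\vec x\in\mathrm{Im}\,\mathbb{H}$. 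Writing $a=\mathrm{Re}(a)+\vec a$ and $b=\mathrm{Re}(b)+\vec b$ with $\vec a,\vec b\in\mathbb{R}^3$, the desired conclusion is therefore equivalent to finding $q\in S^3$ with $\rho(q)(\vec b)\in\mathbb{R}i$ and $\rho(q)(\vec a)\in\mathbb{R}i\oplus\mathbb{R}j$; and by surjectivity of $\rho$ it suffices to produce a rotation $R\in SO(3)$ carrying $\vec b$ into the line $\mathbb{R}i$ and $\vec a$ into the plane $\mathbb{R}i\oplus\mathbb{R}j$.

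To construct $R$ I distinguish cases. If $\vec b=0$ then $qb\bar q=b$ for every $q$, so I only need $R\vec a\in\mathbb{R}i\oplus\mathbb{R}j$, and any $R$ carrying $\vec a$ onto the $i$-axis (the identity if $\vec a=0$ as well) will do. If $\vec b\neq0$ but $\vec a$ is a real multiple of $\vec b$, take any $R$ carrying the line $\mathbb{R}\vec b$ onto $\mathbb{R}i$. In the remaining case $\vec a$ and $\vec b$ are linearly independent: applying Gram--Schmidt to the ordered pair $(\vec b,\vec a)$ yields an orthonormal pair $(e_1,e_2)$ spanning the plane through $\vec b$ and $\vec a$, with $e_1$ a positive multiple of $\vec b$; set $e_3:=e_1\times e_2$, so that $(e_1,e_2,e_3)$ is a positively oriented orthonormal basis of $\mathbb{R}^3$, and let $R\in SO(3)$ be the unique rotation with $Re_1=i$, $Re_2=j$, $Re_3=k$. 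Then $R\vec b=|\vec b|\,i\in\mathbb{R}i$, and $R\vec a\in\mathbb{R}i\oplus\mathbb{R}j$ since $\vec a$ lies in the span of $e_1$ and $e_2$. Finally pick any $q\in\rho^{-1}(R)$; the coefficients $b_0=\mathrm{Re}(b)$, $b_1$, $a_0=\mathrm{Re}(a)$, $a_1$, $a_2$ are automatically real because $R\vec b$ and $R\vec a$ are real vectors in $\mathrm{Im}\,\mathbb{H}$.

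There is no genuine obstacle in this argument; the only points demanding a little care are the orientation bookkeeping --- choosing $e_3=e_1\times e_2$ guarantees that the change-of-frame map lies in $SO(3)$ rather than merely in $O(3)$, which is exactly what lets us realise it as $\rho(q)$ for some $q\in S^3$ --- and the two degenerate cases $\vec b=0$ and $\vec a\in\mathbb{R}\vec b$, which are handled directly as above. For the subsequent computation of $\phi$ in the proof of Theorem~\ref{isop-on-GM sphere} only this pointwise normalisation is needed, although one checks readily that $q$ can be chosen to depend locally smoothly on $(a,b)$ on the open set where $\vec a$ and $\vec b$ are linearly independent.
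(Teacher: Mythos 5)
Your proof is correct and follows essentially the same route as the paper, which simply invokes the surjectivity of the conjugation map $\varphi:S^3\rightarrow SO(3)$ and leaves the construction of the rotation to the reader. Your write-up merely fills in the elementary details (Gram--Schmidt, orientation, degenerate cases) that the paper omits.
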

The proof is easily seen from the fact that the canonical projection $\varphi:S^3\rightarrow SO(3)$, $\varphi(q):\mathbb{R}^3=Im(\mathbb{H})\rightarrow\mathbb{R}^3=Im(\mathbb{H})$ defined by $x\mapsto qx\bar{q}$, is surjective.

We go on proving our theorem. Without loss of generality, we assume
\begin{equation}\label{Qassump}
a=a_1i+a_2j, \quad b=b_0+b_1i,\quad for~ any \quad [Q]=\left[\left(\begin{array}{cc}
          a& b\\
          c& d
        \end{array}\right)\right]\in f^{-1}(0).
        \end{equation}
Then direct calculations show that
$$(g_{\alpha\beta})=\left(\begin{array}{ccc}
          1-|a|^2|b|^2+4a_2^2& -4a_1a_2&0\\
          -4a_1a_2& 1-|a|^2|b|^2+4(a_1^2+b_1^2)&0\\
          0&0&1-|a|^2|b|^2+4(a_1^2+a_2^2+b_1^2)
        \end{array}\right).$$
Thus
$$(g^{\alpha\beta})=\left(\begin{array}{ccc}
       (1-|a|^2|b|^2+4(a_1^2+b_1^2))/E   & 4a_1a_2/E&0\\
          4a_1a_2/E&(1-|a|^2|b|^2+4a_2^2)/E &0\\
          0&0&1/F
        \end{array}\right),$$
        where $E=(1-|a|^2|b|^2+4a_2^2)(1-|a|^2|b|^2+4(a_1^2+b_1^2))-16a_1^2a_2^2$\quad and  $F=1-|a|^2|b|^2+4(a_1^2+a_2^2+b_1^2)$.
On the other hand, by formula (\ref{hessian in Sp2}), we have
$$H_F(\nu_1,\nu_1)=0,\quad H_F(\nu_2,\nu_2)=H_F(\nu_3,\nu_3)=-4a_1b_1b_0, \quad H_F(\nu_1,\nu_2)=2a_2b_1b_0.$$
Finally we derive the formula for $\phi$ on $f^{-1}(0)$ from (\ref{Phi}), that is for any $[Q]\in f^{-1}(0)$ written as in (\ref{Qassump}),
\begin{equation}\label{laplacian phi}
\phi([Q])=\frac{8a_1b_1b_0}{EF}(E-8a_2^2b_1^2).
        \end{equation}
Now it's clear that $\phi$ (and hence $\triangle f$) is not constant on $f^{-1}(0)$, which also implies $f^{-1}(0)$ has non-constant mean curvature by formula (\ref{shape-hessian}). The proof is now complete.
\end{proof}

From (\ref{laplacian phi}) and (\ref{shape-hessian}) we can get an explicit formula for the mean curvature of $f^{-1}(0)\subset \Sigma^7$. Also in the same way, the function $\phi$ and hence $\triangle f$ can be calculated on the whole of $\Sigma^7$ but with more complicated expression.

\bigskip
\begin{ack}
It's our great pleasure to thank Professor
Chiakuei Peng for many useful communications about calculations of covariant derivatives, and also thank Professors Reiko Miyaoka and Gudlaugur Thorbergsson for their interests on our work.
At last, we are glad to thank Professor Karsten Grove for his nice comments.
\end{ack}

\end{document}